\documentclass[12pt,oneside,reqno]{amsart}
\usepackage{amsmath}
\usepackage{amssymb}
\usepackage{graphicx}
\usepackage{tikz-cd}

\usepackage{hyperref}

\usepackage[all]{xy}
\usepackage{xypic}
\usepackage{mathtools}

\usepackage{xspace}
\usepackage{bm}
\usepackage{amsmath}
\usepackage{amstext}
\usepackage{amsfonts}
\usepackage[mathscr]{euscript}
\usepackage{amscd}
\usepackage{latexsym}
\usepackage{amssymb}
\usepackage{enumerate}
\usepackage{xcolor}

\setlength{\topmargin}{-10mm}
\setlength{\textheight}{9.50in}
\setlength{\oddsidemargin}{ .1 in}
\setlength{\evensidemargin}{.1 in}
\setlength{\textwidth}{6.0 in}


\theoremstyle{plain}
\swapnumbers
    \newtheorem{theorem}{Theorem}[section]
    \newtheorem{proposition}[theorem]{Proposition}
    \newtheorem{lemma}[theorem]{Lemma}

    \newtheorem{subsec}[theorem]{}

\theoremstyle{definition}
    \newtheorem{definition}[theorem]{Definition}

\theoremstyle{remark}
        \newtheorem{remark}[theorem]{Remark}


\newcommand{\sect}{\setcounter{figure}{0}\section}

\renewcommand{\thefigure}{\arabic{section}.\arabic{figure}}

\setcounter{secnumdepth}{3}
\setcounter{tocdepth}{1}

\newcommand{\hdim}{{\sf {hdim}}}

\newcommand{\secat}{{\sf {secat}}}

\newcommand{\TC}{{\sf {TC}}}

\newcommand{\rr}{{\mathbb {R}}}
\newcommand{\br}{\bar{\mathbf{r}}}
\newcommand{\EIBr}{{E^{I_{\bar{\textbf{r}}}}_B}}
\newcommand{\EIB}{{E^{I_{(2,3)}}_B}}

\allowdisplaybreaks

\title[Parametrized Topological Complexity for Multi-Robot System]{Parametrized Topological Complexity for\\a Multi-Robot System with Variable Tasks}
\author{Gopal Chandra Dutta}
\address{Stat-Math Unit, Indian Statistical Institute, Kolkata 700108, India}
\email{gdutta670@gmail.com}
	
\author{Amit Kumar Paul}	
\address{Stat-Math Unit, Indian Statistical Institute, Kolkata 700108, India}
\email{amitkrpaul23@gmail.com}
	
\author{Subhankar Sau}
\address{Department of Mathematics and Statistics, IIT Kanpur, Kanpur 208016, India}
\email{subhankarsau18@gmail.com}
	
\subjclass{55M30, 55R80}
\keywords{topological complexity, sequential parametrized topological complexity, motion planning, configuration space}
\thanks{ }

\date{\today}                                          

\begin{document}

\begin{abstract}
We study a generalized motion planning problem involving multiple autonomous robots navigating in a $d$-dimensional Euclidean space in the presence of a set of obstacles whose positions are unknown a priori. Each robot is required to visit sequentially a prescribed set of target states, with the number of targets varying between robots. This heterogeneous setting generalizes the framework considered in the prior works on sequential parametrized topological complexity \cite{FarP22, FarP23}. To determine the topological complexity of our problem, we formulate it mathematically by constructing an appropriate fibration.  
Our main contribution is the determination of this invariant in the generalized setting, which captures the minimal algorithmic instability required for designing collision-free motion planning algorithms under parameter-dependent constraints. We provide a detailed analysis for both odd and even-dimensional ambient spaces, including the essential cohomological computations and explicit constructions of corresponding motion planning algorithms.
\end{abstract}

\maketitle
\tableofcontents

\sect{Introduction}
The development of algorithmic methods for robot motion planning has emerged as an active and rapidly evolving area of research in robotics; we refer the reader to the monographs \cite{Lat91, LaV06} for further references. A significant contribution to this field was made by M. Farber, who introduced the notion of topological complexity $\TC(X)$ for a path-connected topological space $X$ to approach the motion planning problem in \cite{Far03}. The invariant $\TC(X)$ serves as a quantitative measure of the inherent discontinuity or instability present in any motion planning algorithm operating in the configuration space $X$. More precisely, $\TC(X)$ encodes the minimal number of continuous motion planning rules required to define a global algorithm in $X$.
The topological complexity of collision-free motion of multiple robots in an Euclidean space is studied in \cite{FarY04, FarG09}. Subsequently, in \cite{FarGY07}, the authors investigated the problem in the presence of multiple obstacles.

The concept of topological complexity was further generalized by Rudyak, who introduced the notion of $r$-th sequential topological complexity $\TC_r(X)$ for $r \in \mathbb{N}$ and $r \geq 2$, as seen in \cite{Rud10}. This extension captures the complexity of motion planning tasks in which a system must traverse a sequence of $r$ many states. In particular, the classical topological complexity is recovered as the special case $\TC_2(X) = \TC(X)$. The sequential variant thus broadens the scope of topological methods in motion planning by accommodating more complex task specifications involving multiple sequential goals.    

A parametrized framework for motion planning algorithms was introduced in \cite{CohFW21}, offering a more universal and adaptable approach to motion planning. Parametrized algorithms are designed to operate effectively under varying external conditions, which are incorporated into the input of algorithm as parameters. This framework is particularly relevant in scenarios involving the collision-free navigation of multiple objects (or robots) within a $d$-dimensional Euclidean space, where the positions of obstacles are not known in advance. From a mathematical perspective, the minimal degree of instability in such a motion planning problem is captured by the parametrized topological complexity of the Fadell–Neuwirth fibration \cite{CohFW21, CohFW22, FarW}.
Building upon this foundation, the authors in \cite{FarP22, FarP23} introduced and developed the theory of $r$-th sequential parametrized topological complexity. This generalization provides the measure of instability in motion planning tasks where each robot is required to sequentially reach the $r$ specified configurations in $\mathbb{R}^d$, while avoiding collisions with both obstacles and other robots. The authors provided a comprehensive analysis of this invariant in the context of the Fadell–Neuwirth fibration. 
  
In this article, we investigate a generalized and practically motivated scenario in the context of motion planning for multiple autonomous objects. We consider a mechanical system consisting of $n$ autonomous robots, denoted  $z_1, z_2, \dots, z_n$, operating in a $d$-dimensional Euclidean space $\rr^d$ in the presence of a set of obstacles with unknown a priori positions. 
The goal is to design a collision-free motion plan such that each robot $z_i$ sequentially visits $r_i$ prescribed target states in the space, while avoiding collisions with both the obstacles and the other robots. Crucially, the number of target states 
$r_i$ may vary with $i$ introducing heterogeneous task requirements across the robot ensemble. This characteristic marks a significant generalization of the setting studied in \cite{FarP22, FarP23}, where it was assumed that all robots follow the same number of sequential goals.

Our objective is to compute the parametrized topological complexity associated with this generalized scenario. This topological invariant measures the minimal degree of discontinuity or algorithmic instability required to solve the motion planning problem under a parameter-dependent condition. Our results extend the existing framework of sequential parametrized topological complexity and provide new insights into the structural complexity of realistic, heterogeneous multi-robot systems.

This article is organized as follows. 

We review the necessary background on topological complexity, parametrized topological complexity, and their sequential versions in Section~\ref{sec:preliminaries}. In Section \ref{sec:problem}, we examine a concrete example to illustrate the key ideas of our study. Specifically, we consider the problem of collision-free motion planning for two robots operating in three-dimensional Euclidean space in the presence of two obstacles whose positions are not known a priori. The first robot takes two stops, while the second robot takes three stops. We analyze this scenario in full detail and compute the corresponding parametrized topological complexity. This example motivates and clarifies the broader theoretical framework developed in the subsequent sections.

We then formally introduce the general version of the motion planning problem in Section \ref{sec: variable target} and derive an upper bound for its parametrized topological complexity in Proposition \ref{Prop_ubptc}.
To establish a lower bound, we study the associated cohomology algebra in Section \ref{sec:cohomology}.
Our analysis to find the lower bound using cup length in cohomology distinguishes between two cases based on the dimension of the ambient Euclidean space: the odd-dimensional case and the even-dimensional case.
For both cases, we compute the lower bound explicitly; see Proposition \ref{Prop_odd lower bound} for odd case and Proposition \ref{proposition lower bound even case} for even. In the odd-dimensional case, we find that the lower bound coincides with the general upper bound, thus determining the exact value of the parametrized topological complexity in Theorem \ref{Th_odd case}. 
In contrast, for the even-dimensional case, the lower and upper bounds do not agree. To resolve this discrepancy, we construct an explicit motion planning algorithm tailored to the even-dimensional setting in Section \ref{sec:algorithm}, which enables us to compute the exact value of the parametrized complexity in this case as well; see Theorem \ref{Th_even case}.
In Sections \ref{sec:odd} and \ref{sec:even}, while studying the cup length in both cases, we skip some of the computations to maintain the flow for the reader. We include a brief outline of those calculations in the Appendix (Section \ref{sec:appendix}).

\sect{Preliminaries}\label{sec:preliminaries}

In this section, we recall some notions related to sectional category, topological complexity, sequential topological complexity, configuration spaces, and sequential parametrized topological complexity. We also revisit some relevant results concerning these notions.

The \emph{sectional category} of a Hurewicz fibration $p: E\to B$, denoted by $\secat(p)$, is defined as the least non-negative integer $k$ such that there exists an open cover $\{U_0, \dots, U_k\}$ of $B$, where each open set $U_i$ admits a continuous section $s_i \colon U_i \to E$ of $p$. If no such $k$ exists, we set $\secat(p)=\infty$.

Let $X$ be a path-connected space and $X^I$ be the free path space, that is, the space of all continuous maps $I=[0,1] \to X$ equipped with compact-open topology. 
Consider the fibration $$\pi \colon X^I \to X \times X, \quad \text{ defined by } \pi(\alpha)=(\alpha(0), \alpha(1)).$$ 
The \emph{topological complexity} $\TC(X)$ of $X$ is defined as $\TC(X) := \secat(\pi)$. Topological complexity provides a quantitative measure of the inherent discontinuity or instability present in any motion planning algorithm. In general, computing this invariant is challenging. In most cases, rather than determining the exact value of $\TC(X)$, one focuses on finding upper and lower bounds, as various tools are available in the literature for this purpose. The upper bound is usually obtained from the dimensional arguments. On the other hand, to find a lower bound for the topological complexity, we use a classical result of Schwarz \cite{Sva66}, stated as follows.

\begin{lemma}\label{Lem_lower bound of secat}	
Let $p: E \to B$ be a fibration, and $R$ be a coefficient ring. If there exist cohomology classes $u_1, \cdots, u_k\in \ker[p^*: H^*(B; R)\to H^*(E; R)]$ such that their cup-product is nonzero, $u_1 \smile \cdots \smile u_k \neq 0 \in H^*(B; R)$, then $\secat(p) \geq k$.
\end{lemma}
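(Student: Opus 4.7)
The plan is to argue by contradiction using the classical trick of Schwarz, passing from absolute cohomology to relative cohomology via the sections. Suppose, for contradiction, that $\secat(p) \leq k-1$, so that there exists an open cover $\{U_0, U_1, \dots, U_{k-1}\}$ of $B$ together with continuous sections $s_j \colon U_j \to E$ of $p$. Denoting the inclusion by $\iota_j \colon U_j \hookrightarrow B$, the relation $p \circ s_j = \iota_j$ yields $\iota_j^* = s_j^* \circ p^*$ in cohomology. Consequently, for every $i$ and every $j$,
$$\iota_j^*(u_i) \;=\; s_j^*\bigl(p^*(u_i)\bigr) \;=\; 0,$$
since $u_i \in \ker(p^*)$. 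In other words, each $u_i$ restricts trivially to each $U_j$.

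Next I would use the long exact sequence of the pair $(B, U_{i-1})$ to lift each $u_i$ to a relative class $\widetilde{u}_i \in H^{\ast}(B, U_{i-1}; R)$ whose image under the canonical map $H^{\ast}(B, U_{i-1}; R) \to H^{\ast}(B; R)$ is exactly $u_i$. Applying the naturality of the relative cup product, these lifts can be multiplied to produce
$$\widetilde{u}_1 \smile \widetilde{u}_2 \smile \cdots \smile \widetilde{u}_k \;\in\; H^{\ast}\bigl(B,\, U_0 \cup U_1 \cup \cdots \cup U_{k-1};\, R\bigr),$$
and this relative cup product maps to $u_1 \smile u_2 \smile \cdots \smile u_k$ under the forgetful map to $H^{\ast}(B; R)$.

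Since $\{U_0, \dots, U_{k-1}\}$ covers $B$, the target group above is $H^{\ast}(B, B; R) = 0$, forcing the relative cup product itself to vanish and hence $u_1 \smile \cdots \smile u_k = 0$ in $H^{\ast}(B; R)$. This contradicts the hypothesis, so $\secat(p) \geq k$, as required.

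The only nontrivial ingredient in this plan is the compatibility between absolute and relative cup products, namely that classes lifting to $H^{\ast}(B, U_i; R)$ multiply to a class in $H^{\ast}(B, \bigcup_i U_i; R)$ whose forgetful image is the absolute product; this is a standard property of the singular cochain cup product (one may pass to a CW model if one wants a cellular argument), so no real obstacle arises. Every other step is a formal manipulation in the long exact sequence of a pair and the functoriality of $p^{\ast}$.
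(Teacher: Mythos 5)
The paper does not prove this lemma; it states it and cites it as a classical result of Schwarz \cite{Sva66}, so there is no in-paper proof to compare against. Your argument is correct and is precisely the standard proof of the Schwarz cup-length lower bound. The key steps all check out: from $p\circ s_j = \iota_j$ and $u_i \in \ker p^*$ you correctly get $\iota_j^*(u_i) = s_j^*(p^*(u_i)) = 0$ for every $j$; the long exact sequence of the pair $(B, U_{i-1})$ then lifts $u_i$ to $\widetilde{u}_i \in H^*(B, U_{i-1}; R)$; and the iterated relative cup product $H^*(B, A) \otimes H^*(B, A') \to H^*(B, A \cup A')$ pushes the lifts into $H^*(B, U_0 \cup \cdots \cup U_{k-1}; R) = H^*(B, B; R) = 0$, contradicting $u_1 \smile \cdots \smile u_k \neq 0$ via naturality of the forgetful map. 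The one point you flag as "the only nontrivial ingredient" --- the compatibility of relative and absolute cup products --- is indeed the load-bearing fact, and it does require the $U_j$ to be open so that one has an excisive couple at each stage; since the definition of $\secat$ uses open covers, this is automatic. This is the textbook argument (e.g.\ as in Cornea--Lupton--Oprea--Tanr\'e or Schwarz's original paper), so there is nothing to contrast with the paper's treatment beyond noting that the authors simply defer to \cite{Sva66}.
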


For a fix $r \geq 2$, consider the evaluation map $\pi_r \colon X^I \to X^r$ defined as $$\pi_r(\alpha)=(\alpha(t_1), \dots, \alpha(t_r)),$$ where $0 \leq t_1 <t_2 < \dots < t_r \leq 1$. Often we refer to the points $t_1, \dots, t_r$ as \emph{time schedule}.
We recall that the definition of sequential topological complexity follows from \cite{Rud10}.
\begin{definition}
The \emph{$r$-th sequential topological complexity} of a path-connected space $X$ is denoted by $\TC_r(X)$, defined as $\TC_r(X):=\secat (\pi_r)$. 
\end{definition}
It is clear that $\TC_2(X)=\TC(X)$.
Let $Y$ be a path-connected topological space. The configuration space of $n$ distinct ordered points lying in $Y$, denoted by $F(Y, n)$, is defined as follows:
\begin{equation*}
F(Y, n)= \{(y_1, \dots, y_n)\in Y^n~ \colon ~ y_i\neq y_j \text{ if } i\neq j\}.
\end{equation*}

For $r,d,n\geq 2$, the $r$-th sequential topological complexity of the configuration space $F(\rr^d, n)$ 
is given by
\begin{equation}\label{sq tc of con sp}
\TC_r(F(\rr^d, n)) = 
\begin{cases}
r(n-1) & \mbox{ if } ~~ d \mbox{ is odd },\\
r(n-1)-1 &\mbox{ if } ~~ d \mbox{ is even },
\end{cases}
\end{equation}
(see \cite[Theorem 4.1]{GonG15}).
The result \eqref{sq tc of con sp} describes the sequential topological complexity of the practical problem: the collision-free motion of $n$ robots, each required to pass through $r$-many target points in $d$-dimensional Euclidean space.

Cohen, Farber, and Weinberger introduced the notion of parametrized topological complexity for a Hurewicz fibration in \cite{CohFW21}. Subsequently, in \cite{FarP22}, Farber and Paul generalized this concept to the sequential parametrized topological complexity. We consider a Hurewicz fibration $p: E \to B$ and obtain the space
\begin{equation}\label{Eq_ErB}
E^r_B= \{(e_1, \cdots, e_r)\in E^r: \, p(e_1)=\cdots = p(e_r)\}.
\end{equation}
Let $E^I_B$ be the space of all paths in $E$ that lie in a single fibre of $E$ under the map $p\colon E\to B$, i.e. 
\begin{equation*}
 E^I_B=\{\alpha: I = [0,1]\to E \text{ such that } p\circ \alpha \text{ is constant path in } B\}.
\end{equation*}
For a fix $r \ge 2$, consider the evaluation map 
\begin{equation*}
\Pi_r : E^I_B \to E^r_B, \text{ defined as } \quad \Pi_r(\alpha) = (\alpha(t_1), \alpha(t_2), \dots,  \alpha(t_r)),
\end{equation*} 
where $0\leq t_1\leq \ldots \leq t_r\leq 1$. Note that the section of the fibration $\Pi_r$ can be realized as an algorithm for sequential parametrized motion. 

\begin{definition} \label{definition sequential parametrized topological complexity}
The \emph{$r$-th sequential parametrized topological complexity} of the given fibration $p : E \to B$ is denoted by $\TC_r[p : E \to B]$ and is defined as 
$$\TC_r[p : E \to B]:=\secat(\Pi_r).$$
\end{definition}

Consider the map $p: F(\rr^d, m+n)\to F(\rr^d, m)$ defined as $$p(x_1, x_2,\ldots, x_{m+n}) = (x_1,x_2,\ldots,x_m).$$
Then $p$ is a locally trivial fibration (see \cite{FadN62}). It is known as the \emph{Fadell-Neuwirth fibration}. Let $n\geq 1$ and $r,d,m\geq 2$. The $r$-sequential parametrized topological complexity of Fadell-Neuwirth fibration is given by
\begin{equation}\label{paratc of con sp}
\TC_r[p: F(\rr^d, m+n) \to F(\rr^d, m)] = 
\begin{cases}
rn+m-1 & \mbox{ if } ~~ d \mbox{ is odd },\\
rn+m-2 &\mbox{ if } ~~ d \mbox{ is even },
\end{cases}
\end{equation}
(see \cite{FarP22} and \cite{FarP23}).
The result in \eqref{paratc of con sp} describes sequential parametrized topological complexity for a practical scenario: the collision-free motion of $n$ robots, each required to pass through $r$-many target points, in the presence of $m$-many obstacles in a $d$-dimensional Euclidean space with unknown a priori positions. 

In this paper, we focus on a generalized version of this problem, where each robot is required to attain a possibly different number of target points than the others.

\sect{A motivating problem}\label{sec:problem}

In this section, we discuss a motion planning problem in $\mathbb{R}^3$ where two robots, say $z_1$ and $z_2$, move in the presence of two obstacles $o_1$ and $o_2$, without any collision. The first robot $z_1$ takes two stops (initial stop and final stop), while the second robot $z_2$ takes three stops (initial and final stop along with an intermediate stop). Let us formulate this problem mathematically. 

We consider the following Fadell-Neuwirth fibration
\begin{equation}\label{Eq_fibsec example}
p: E=F(\rr^3, 4) \to B=F(\rr^3, 2) \text{ defined as } (o_1, o_2, z_1, z_2)\mapsto (o_1, o_2).
\end{equation}
As discussed in \eqref{Eq_ErB}, we have the space $$E^3_B=\{(e_1, e_2, e_3)\in E^3 ~ \colon  ~p(e_1)=p(e_2)=p(e_3)\}.$$ 
We consider another fibration \begin{equation}\label{eq_p1_example}
 p_1: E=F(\rr^3, 4) \to F(\rr^3, 3) \text{ defined as } (o_1, o_2, z_1, z_2)\mapsto (o_1, o_2, z_1).
 \end{equation}
We define the subspace $E_B^{(2, 3)}$ of $E^3_B$ as
\begin{equation}\label{Eq_E23B}
E_B^{(2, 3)}:=\{(e_1, e_2, e_3)\in E^3_B :  p_1(e_2)=p_1(e_3)\}.
\end{equation}

Notice that an element in $E_B^{(2, 3)}$ represents three configuration points $e_1, e_2, e_3$, where the configurations $e_s = (o_1, o_2, z^s_1, z^s_2)$ for $1\leq s\leq 3$ satisfying the following conditions:
\begin{enumerate}[(i)]
\item $o_1 \neq o_2$,
\item $o_j\neq z^s_i$ for $i,j=1,2$,
\item $z^s_1\neq z^s_2$,
\item $z^2_1 = z^3_1$.
\end{enumerate}
It follows that a point in $E^{(2,3)}_B$ can be viewed as $(o_1, o_2, z_1^1, z_1^2, z_2^1, z_2^2, z_2^3)\in (\rr^3)^7$ satisfying conditions (i), (ii), and (iii).

As in the discussion in Section \ref{sec:preliminaries}, it follows that an element $\alpha \in E^I_B$ can be written as $$\alpha(t) = (o_1, o_2,\alpha_1(t), \alpha_2(t)), ~t\in I.$$ Note that $o_1$ and $o_2$ are the positions of the obstacles associated to the path $\alpha$, satisfies the following conditions: 
\begin{flalign}\label{Eq_condition in paths}
\text{(i)}& ~o_1\neq o_2,\nonumber \\ 
\text{(ii)}& ~ \alpha_1(t)\neq \alpha_2(t) \text{ for all } t\in I, \\  
\text{(iii)}& ~ \alpha_i(t) \neq o_j \text{ for all } t\in I \text{ and } i,j=1,2.\nonumber
\end{flalign}
Therefore, we obatin the fibration $$\Pi_3: E^I_B \to E^3_B \quad \text{is defined by} \quad \alpha \mapsto (\alpha(0), \alpha(\frac{1}{2}), \alpha(1)).$$ 

Here, we consider a space $\EIB$
that fits into the following pullback diagram:
$$
\xymatrix{
\EIB \ar[rr]^{} \ar[dd]_{\Pi_{(2, 3)}} && E^I_B \ar[dd]^{\Pi_3} \\ \\
E_B^{(2, 3)} \ar@{^{(}->} [rr]_{\iota} && E^3_B
}
$$
The space $\EIB$ can be identified with the inverse image of $E_B^{(2, 3)}$ under the map $\Pi_3$ and $\Pi_{(2, 3)}: \EIB \to E_B^{(2, 3)}$ is the restriction map of $\Pi_3$.
So, an element $\alpha$ in $\EIB$ satisfies the conditions (i), (ii), (iii) in \eqref{Eq_condition in paths} along with an additional condition
$$\text{(iv)}~\alpha_1(\frac{1}{2})=\alpha_1(1).$$

The fibration $\Pi_{(2, 3)}$ can be express as $$\Pi_{(2, 3)}: \EIB \to E_B^{(2, 3)}, \quad \alpha \mapsto (o_1,o_2,\alpha_1(0), \alpha_1(\frac{1}{2}), \alpha_2(0), \alpha_2(\frac{1}{2}), \alpha_2(1)).$$
The sectional category $\secat(\Pi_{(2, 3)})$ is the topological complexity of this problem and we denote it by $\TC_{(2, 3)}[p: E \to B]$.
From the above pullback diagram it is clear that $$\TC_{(2, 3)}[p: E \to B] \leq \TC_3[p: E \to B].$$ 
In the following discussion, we will find the number $\TC_{(2, 3)}[p: E \to B]$.

\vspace{.25cm}
\noindent \textbf{Upper Bound:} 
The fibre $X$ of the fibration \eqref{Eq_fibsec example} is given by 
$$p^{-1}\{(o_1, o_2)\} \cong \{(z_1,z_2)\in \rr^3-\{o_1, o_2\} \times \rr^3-\{o_1, o_2\}: z_1\neq z_2 \} = F(\rr^3-\{o_1, o_2\},2).$$
Since the connectivity of $X$ is $1$, we obtain the following upper bound
\begin{align*}
\TC_{(2, 3)}[p: E \to B]=\secat(\Pi_{(2, 3)}) &<  \frac{\hdim (E_B^{(2, 3)})+1}{2} \\
&\leq \frac{ 2 \hdim (X) +\hdim (X_1) + \hdim (B) + 1}{2},
\end{align*}
where $X_1$ is the fibre of the fibration \eqref{eq_p1_example} and $\hdim$ denotes the homotopy dimension, defined as follows $$\hdim(Z)= \min \big\{\dim(Y): Y \text{ is homotopy equivalent to } Z \big\}.$$
The first inequality follows from \cite[Theorem 5]{Sva66}. For the second inequality, consider the fibration
\[
\hat{p} : E_B^{(2,3)} \to B, \quad (e_1, e_2, e_3) \mapsto p(e_1).
\]
Observe that we can fit the fibre \(\hat{p}^{-1}\{(o_1,o_2)\}\) into the following locally trivial fibration:
\[
X_1 \xrightarrow{\iota} \hat{p}^{-1}\{(o_1,o_2)\} \xrightarrow{\rho} X \times X,
\]
where \(\rho (e_1, e_2, e_3) = (e_1, e_2)\). It follows that $$\hdim(\hat{p}^{-1}\{(o_1,o_2)\}) \leq \hdim(X\times X) + \hdim(X_1) = 2\hdim(X)+ \hdim(X_1).$$
Therefore,
$$\hdim E_B^{(2, 3)} \leq \hdim (\hat{p}^{-1}\{(o_1,o_2)\}) + \hdim B \leq 2\hdim(X)+ \hdim(X_1) + \hdim(B).$$ The homotopy dimensions of $X, X_1$ and $B$ are $4, 2,$ and $2$ respectively. Thus, $$\TC_{(2, 3)}[p: E \to B]\leq 6.$$

\vspace{.25cm}
\noindent \textbf{Lower Bound:} In the remainder of this section, we compute the lower bound of the topological complexity using cup length. We also find that the lower bound coincides with the upper bound, which provides us with the exact value of $\TC_{(2, 3)}[p: E \to B]$.

\begin{proposition}\label{Prop_cup product example}
Let $p\colon E\to B$ be the fibration defined in \eqref{Eq_fibsec example} and $R$ be a coefficient ring. Consider the diagonal map $\Delta : E \to E^{(2, 3)}_B$ defined as $\Delta(e)= (e, e, e)$. For the homomorphism $\Delta^\ast: H^\ast(E^{(2, 3)}_B;R) \to H^\ast(E;R)$ induced by $\Delta$, if there exist cohomology classes $u_1, \ldots, u_k \in \ker[\Delta^* : H^\ast(E^{(2, 3)}_B;R) \to H^*(E; R)]$ such that $$u_1 \smile \cdots \smile u_k \neq 0 \in H^\ast(E^{(2, 3)}_B;R)$$ then $$\TC_{(2, 3)}[p: E \to B] \geq k.$$ 
\end{proposition}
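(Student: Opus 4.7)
The plan is to invoke Schwarz's cohomological bound, Lemma~\ref{Lem_lower bound of secat}, applied to the fibration $\Pi_{(2,3)} \colon \EIB \to E_B^{(2,3)}$ whose sectional category is by construction $\TC_{(2,3)}[p \colon E \to B]$. That lemma supplies a lower bound in terms of cup-length inside $\ker \Pi_{(2,3)}^\ast$, whereas the hypothesis furnishes cup-length inside $\ker \Delta^\ast$; the substance of the proof is therefore to identify these two kernels.

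Let $c \colon E \to \EIB$ denote the constant-path inclusion $c(e)(t) = e$, which is well defined because a constant path lies in a single fibre of $p$ and trivially satisfies condition~(iv). The identity $\Pi_{(2,3)} \circ c = \Delta$ passes to cohomology as $\Delta^\ast = c^\ast \circ \Pi_{(2,3)}^\ast$, which immediately gives $\ker \Pi_{(2,3)}^\ast \subseteq \ker \Delta^\ast$. The reverse inclusion reduces to showing $c^\ast$ is injective, and this in turn will follow once $c$ is exhibited as a homotopy equivalence. The evaluation $\mathrm{ev}_0 \colon \EIB \to E, \ \alpha \mapsto \alpha(0),$ already provides a strict left inverse ($\mathrm{ev}_0 \circ c = \mathrm{id}_E$), so the remaining ingredient is a deformation retraction of $\EIB$ onto its subspace of constant paths.

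The construction of that deformation retraction is the main obstacle. The naive scaling $H(\alpha, s)(t) = \alpha(st)$ is \emph{not} admissible here, because condition~(iv) demands $\alpha_1(1/2) = \alpha_1(1)$ whereas $\alpha_1(s/2) = \alpha_1(s)$ fails for generic $s$ and $\alpha$. To circumvent this, I would first deformation-retract $\EIB$ onto the smaller subspace $\widetilde{E} \subset \EIB$ on which $\alpha_1|_{[1/2,1]}$ is literally constant, exploiting simple connectivity of $\rr^3 \setminus \{o_1, o_2\}$ to contract the loop $\alpha_1|_{[1/2,1]}$ relative to its basepoint $\alpha_1(1/2)$ (the dimension hypothesis $d = 3$ is essential here), while simultaneously perturbing $\alpha_2$ by a generic push-off to preserve the non-collision condition. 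On $\widetilde{E}$ the scaling retraction preserves the constraint automatically and yields the further retraction onto constant paths; once this is in place, $c^\ast$ is an isomorphism, $\ker \Delta^\ast = \ker \Pi_{(2,3)}^\ast$, and Lemma~\ref{Lem_lower bound of secat} applied to $u_1, \ldots, u_k$ viewed inside $\ker \Pi_{(2,3)}^\ast$ delivers $\secat(\Pi_{(2,3)}) \geq k$, which is the claimed inequality.
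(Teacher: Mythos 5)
Your outline mirrors the paper's: factor $\Delta = \Pi_{(2,3)} \circ c$ where $c : E \to \EIB$ is the constant-path map, reduce to the injectivity of $c^*$, and try to upgrade this to $c$ being a homotopy equivalence via a deformation retraction of $\EIB$ onto the constant paths. Your observation that the naive scaling $H(\alpha,s)(t) = \alpha(st)$ does not preserve condition~(iv) is correct and, notably, is a subtlety that the paper's own proof passes over in silence (it simply asserts that $c$ is a homotopy equivalence, writing $E^I_B$ where $\EIB$ is meant).

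The repair you propose, however, cannot be made to work, and the obstruction is essential rather than technical. Simple connectivity of $\rr^3 \setminus \{o_1,o_2\}$ guarantees that each individual loop $\alpha_1|_{[1/2,1]}$ is nullhomotopic, but a deformation retraction of $\EIB$ onto $\widetilde{E}$ requires a nullhomotopy chosen \emph{continuously} in $\alpha$, i.e.\ a contraction of the based loop space $\Omega\big(\rr^3 \setminus \{o_1,o_2\}\big)$. Since $\rr^3 \setminus \{o_1,o_2\} \simeq S^2 \vee S^2$ has $\pi_2 \cong \mathbb{Z}^2 \neq 0$, that loop space has $\pi_1 \neq 0$ and is not contractible, so no such continuous family exists. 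In fact the evaluation fibration $\mathrm{ev}_0 : \EIB \to E$ has fibre homotopy equivalent to $\Omega(S^2 \vee S^2)$ and is therefore not a homotopy equivalence; consequently $c$, which is a section of $\mathrm{ev}_0$, is genuinely not a homotopy equivalence and the retraction you aim for does not exist by any construction. What the argument actually needs is only the much weaker fact that the three evaluation maps $\mathrm{ev}_{t_s} : \EIB \to E$, $\alpha \mapsto \alpha(t_s)$, are pairwise homotopic (slide the evaluation time from $t_s$ to $t_{s'}$ within $I$). This gives $\Pi_{(2,3)}^*(w^s_{ij} - w^{s'}_{ij}) = \mathrm{ev}_{t_s}^*(w_{ij}) - \mathrm{ev}_{t_{s'}}^*(w_{ij}) = 0$ directly, and since $\ker \Delta^*$ is the ideal generated by these differences, $\ker\Delta^* \subseteq \ker\Pi_{(2,3)}^*$ follows and Lemma~\ref{Lem_lower bound of secat} applies; the homotopy-equivalence claim for $c$ is not needed and should be avoided.
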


\begin{proof}
Define a map $c : E \to E_B^I$ by $c(e)(t) = e$ for all $t\in I$, that is, the constant path. Notice that the map $c$ is a homotopy equivalence and satisfies the following commutative diagram: 
$$
\xymatrix{
	E \ar[rr]^{c} \ar[dr]_{\Delta}&&E_B^I \ar[dl]^{\Pi_{(2, 3)}}\\
	& E^{(2, 3)}_B & &
}
$$
It follows that the induced map $c^*\colon H^*(E^I_B;R)\to H^*(E;R)$ is an isomorphism. Hence $$\ker[\Pi_{(2, 3)}^*: H^\ast(E^{(2, 3)}_B;R) \to H^*(E_B^I; R)] = \ker[\Delta^*: H^\ast(E^{(2, 3)}_B;R) \to H^*(E; R)].$$ Now, using Lemma \ref{Lem_lower bound of secat}, we get $\TC_{(2, 3)}[p: E \to B]=\secat(\Pi_{(2, 3)})\geq k$. 
\end{proof}
 
Recall that the integral cohomology ring $H^\ast(F(\rr^3, 4))$, as described in \cite[Theorem V.4.2]{FadH01}, contains the cohomology classes $w_{12}, w_{13}, w_{14}, w_{23}, w_{24}, w_{34}$ of degree $2$. Note that for $1\leq i< j\leq 4,$ the cohomology class $w_{ij}$ is obtained as the pullback of the fundamental class, say $u$, under the map $\phi_{ij}\colon F(\rr^3,4)\to S^2$ defined as $(x_1,x_2,x_3,x_4)\mapsto \frac{x_i-x_j}{||x_i-x_j||}$, i.e., $w_{ij} = \phi_{ij}^*(u)$. It is clear that $w_{ij} = - w_{ji}$. Moreover, the cohomology classes satisfy the following relations: 
 \begin{equation}\label{Eq_relations in example}
 (w_{ij})^2=0 \quad \mbox{and}\quad w_{ip}w_{jp}= w_{ij}(w_{jp}-w_{ip})\quad \text{for all }\,  1\leq i<j<p\leq 4.
 \end{equation}

\begin{proposition}\label{Prop_cohom ring E23}
The integral cohomology ring $H^*(E_B^{(2,3)})$ contains cohomology classes $w^s_{ij}$ of degree $2$, where $1\leq i < j \leq 4$ and $1\leq s \leq 3$, satisfying the following relations:
\begin{enumerate}[(a)]
\item $(w^s_{ij})^2=0$
\item $w^s_{ip} w^s_{jp}= w^s_{ij} (w^s_{jp} - w^s_{ip}) \text{ for } i<j<p$,
\item $w^s_{12} = w^{s'}_{12} \text{ for } 1\leq s \leq s' \leq 3$,
\item $w^2_{13}=w^3_{13}$ and $w^2_{23}=w^3_{23}$.
\end{enumerate}
\end{proposition}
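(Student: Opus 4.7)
The plan is to construct each class $w^s_{ij}$ as a pullback of the fundamental class $u\in H^2(S^2)$ along an appropriate normalized-difference map, and then deduce each of the four listed relations either by functoriality or by observing that the defining constraints of $E_B^{(2,3)}$ inside $E^3_B$ force certain maps to $S^2$ to coincide.

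First, for each $s\in\{1,2,3\}$ I would introduce the projection $\pi_s\colon E_B^{(2,3)}\to F(\rr^3,4)$, $(e_1,e_2,e_3)\mapsto e_s$, and define $w^s_{ij}:=\pi_s^*(w_{ij})\in H^2(E_B^{(2,3)})$, where the $w_{ij}$ are the degree-$2$ generators of $H^*(F(\rr^3,4))$ recalled just before the proposition. Equivalently, $w^s_{ij}=(\phi^s_{ij})^*u$ for the composite $\phi^s_{ij}:=\phi_{ij}\circ\pi_s\colon E_B^{(2,3)}\to S^2$, sending $(e_1,e_2,e_3)$ to $\tfrac{(e_s)_i-(e_s)_j}{\|(e_s)_i-(e_s)_j\|}$, where $(e_s)_k$ denotes the $k$-th coordinate of the configuration $e_s=(o_1,o_2,z_1^s,z_2^s)$.

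With this setup, relations (a) and (b) would follow immediately by applying the ring homomorphism $\pi_s^*$ to the known identities $(w_{ij})^2=0$ and $w_{ip}w_{jp}=w_{ij}(w_{jp}-w_{ip})$ from \eqref{Eq_relations in example}; pure functoriality does the work. The substantive content is in (c) and (d), and here I would use the explicit defining constraints of $E_B^{(2,3)}\subset E^3_B$: the first two coordinates $o_1,o_2$ are shared across $e_1,e_2,e_3$, and the condition $p_1(e_2)=p_1(e_3)$ forces $z_1^2=z_1^3$. For (c), $\phi^s_{12}$ depends only on $o_1$ and $o_2$, which are independent of $s$, so $\phi^1_{12},\phi^2_{12},\phi^3_{12}$ are literally the same continuous map, whence their pullback classes agree. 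For (d), since $z_1^2=z_1^3$ on $E_B^{(2,3)}$, the maps $\phi^2_{13}$ and $\phi^3_{13}$ (which involve $o_1$ and $z_1^s$) coincide as functions, and similarly $\phi^2_{23}=\phi^3_{23}$; pullback then yields the stated equalities.

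I do not anticipate a genuine obstacle here; the argument is essentially bookkeeping via naturality. The only point requiring care is to correctly match each coordinate-constraint defining $E_B^{(2,3)}$ with the precise pair of maps to $S^2$ it makes equal, so that the relations produced by this mechanism are exactly the ones (c) and (d) listed, with no extra identifications missed or spurious ones inserted.
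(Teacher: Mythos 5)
Your proposal is correct and matches the paper's proof in essence: the paper defines $w^s_{ij} := (q_s)^*(w_{ij})$ via the projection $q_s : E_B^{(2,3)} \to E = F(\rr^3,4)$, deduces (a)--(b) by functoriality from \eqref{Eq_relations in example}, and reads off (c)--(d) from the defining constraints \eqref{Eq_E23B} exactly as you do. Your extra step of phrasing (c) and (d) via literal equality of the normalized-difference maps $\phi^s_{ij}$ to $S^2$ is a slightly more explicit rendering of the same bookkeeping.
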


\begin{proof} 
For $1\leq s\leq 3$, consider the projection map $q_s: E_B^{(2, 3)} \to E$  defined as $$(e_1,e_2,e_3)\mapsto e_s, \text{ where } e_s = (o_1, o_2, z^s_1, z^s_2).$$ 
One can write an element $e_s = (o_1, o_2, z^s_1, z^s_2)$ in $E$ as $(x_1, \ldots, x_4)$ where $x_i = o_i$ for $1\leq i\leq 2$ and $x_i = z^s_{i-2}$ for $3\leq i \leq 4$. Therefore, the cohomology class $w_{ij}\in H^2(E)$ induces a cohomology class $w^s_{ij}$ in $H^2(E_B^{(2,3)})$, defined as $$w^s_{ij} := (q_{s})^*(w_{ij}).$$
Thus we get the desired results using \eqref{Eq_E23B} and \eqref{Eq_relations in example}.
\end{proof}

\noindent In the following Proposition, we find a lower bound of $\TC_{(2, 3)}[p: E \to B]$.

\begin{proposition}
For the fibration in \ref{Eq_fibsec example}, we have $\TC_{(2, 3)}[p: E \to B] \geq 6.$
\end{proposition}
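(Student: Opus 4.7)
By Proposition \ref{Prop_cup product example}, it suffices to exhibit six cohomology classes $u_1,\ldots,u_6 \in \ker \Delta^*$ whose cup product in $H^{12}(E_B^{(2,3)})$ is nonzero. Since $\Delta^*(w^s_{ij}) = w_{ij}$ is independent of $s$, every difference $w^s_{ij} - w^{s'}_{ij}$ lies in $\ker \Delta^*$. My proposal is to use
\begin{align*}
u_1 &= w^2_{13} - w^1_{13}, & u_2 &= w^2_{23} - w^1_{23}, \\
u_3 &= w^2_{14} - w^1_{14}, & u_4 &= w^2_{24} - w^1_{24}, \\
u_5 &= w^3_{14} - w^1_{14}, & u_6 &= w^3_{24} - w^1_{24}.
\end{align*}
Intuitively, $u_1, u_2$ encode the common position of robot $z_1$ at stops $2=3$ (by condition (iv)) relative to each of the two obstacles, while $u_3, u_4$ and $u_5, u_6$ encode the two later positions of robot $z_2$ against the same two obstacles. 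The count $r_1 + r_2 + m - 1 = 6$ matches the upper bound from the previous part.

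The plan is to prove $P := u_1 u_2 u_3 u_4 u_5 u_6 \neq 0$ by explicit expansion. Writing $z := w^1_{12} = w^2_{12} = w^3_{12}$ (relation (c) of Proposition \ref{Prop_cohom ring E23}), relation (b) gives $w^s_{13} w^s_{23} = z(w^s_{23} - w^s_{13})$, and similarly for the index pairs $(1,4)$ and $(2,4)$. Combining these with $(w^s_{ij})^2 = 0$, each pair simplifies as $u_{2i-1} u_{2i} = z X_i - Y_i$, where $X_i$ is a linear combination of degree-$2$ generators and $Y_i$ is a sum of two ``cross-stop'' degree-$4$ monomials. Since $z^2 = 0$, the full triple product telescopes to
\[
P \;=\; -\,Y_1 Y_2 Y_3 \,+\, z\,\bigl(X_1 Y_2 Y_3 + Y_1 X_2 Y_3 + Y_1 Y_2 X_3\bigr).
\]
A direct expansion of $Y_1 Y_2 Y_3$, after eliminating repeated factors via $(w^s_{ij})^2 = 0$, reduces to a sum of four (a priori distinct) degree-$12$ monomials in the generators $w^s_{ij}$.

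The final and main step is to show that the resulting element of $H^{12}(E_B^{(2,3)})$ is nonzero. I would proceed by normalizing each of the four surviving monomials using relation (b) into a canonical form (monomials $\prod_k w^{s_k}_{i_k j_k}$ with the larger indices $j_k$ increasing, after which no further instance of relation (b) is applicable), and then exhibiting a normal-form basis element which appears with a nonzero coefficient. Equivalently, one can assemble a Gauss-type map $\Phi : E_B^{(2,3)} \to (S^2)^6$ from six Gauss maps $\phi^{s_k}_{i_k j_k}$ whose pullbacks recover the factors of a chosen surviving monomial, so that $\Phi^*$ of the top class of $(S^2)^6$ equals that monomial; its non-triviality is then witnessed by an explicit $12$-dimensional cycle in $E_B^{(2,3)}$ on which $\Phi$ restricts as a degree-one map.

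The principal technical challenge is the combinatorial bookkeeping: one must verify that, under the relations in Proposition \ref{Prop_cohom ring E23}, the four surviving monomials in $Y_1 Y_2 Y_3$ together with the $z$-weighted contributions do not all cancel out. This is the heart of the argument and mirrors the cup-length calculations of \cite{FarP22, FarP23}, now specialized to the heterogeneous time-schedule $r_1 = 2, r_2 = 3$.
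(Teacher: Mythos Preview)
Your strategy is sound and, in fact, the verification you defer as ``the principal technical challenge'' goes through cleanly. Carrying your telescoping one step further: since $w^1_{14}w^1_{24}=z(w^1_{24}-w^1_{14})$, one has $Y_2Y_3=z(w^1_{24}-w^1_{14})(w^2_{14}w^3_{24}+w^2_{24}w^3_{14})$, so the term $zX_1Y_2Y_3$ vanishes and $Y_1Y_2Y_3$ already carries a factor $z$. Working modulo $z$ in the remaining bracket and collecting the eight monomials $w^1_{a4}w^2_{b4}w^3_{c4}$ gives
\[
P \;=\; 2\,w_{12}\,(w^1_{23}w^2_{13}+w^1_{13}w^2_{23})\,(w^1_{14}w^2_{24}w^3_{24}-w^1_{24}w^2_{14}w^3_{14}),
\]
a combination of four \emph{distinct} top-degree basis elements $w_{12}\,w^1_{a3}w^1_{b4}\,w^2_{c3}w^2_{d4}\,w^3_{e4}$ in $H^{12}(E_B^{(2,3)})$. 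Hence $P\neq 0$, and no separate Gauss-map argument is needed.

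By way of comparison, the paper chooses a different six-tuple, repeating two of the classes: it shows $(w^2_{13}-w^1_{13})^2(w^2_{14}-w^1_{14})^2(w^2_{23}-w^1_{23})(w^3_{14}-w^1_{14})\neq 0$. The point of the repetition is that $(w^2_{1j}-w^1_{1j})^2=-2\,w^1_{1j}w^2_{1j}$ collapses immediately, so the product becomes $4\,w^1_{13}w^2_{13}w^1_{14}w^2_{14}(w^2_{23}-w^1_{23})(w^3_{14}-w^1_{14})$ and only a single application of relation~(b) (to $w^s_{13}w^s_{23}$) is needed before one reads off a surviving basis element. Your more symmetric choice (pairing each robot position with \emph{both} obstacles) is perhaps more conceptually natural---it matches the count $r_1+r_2+(m-1)$ directly---but requires the extra $z^2=0$ bookkeeping above. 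Both routes lead to the same bound.
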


\begin{proof}
Recall that the diagonal map $\Delta : E \to E^{(2, 3)}_B$ is defined by $\Delta(e)= (e, e, e)$. For $1\leq s,s'\leq 3$ and $1\leq i < j \leq 4$, we have $$\Delta^*(w^s_{ij}) = (q_s\circ \Delta)^*(w_{ij}) = (q_{s'}\circ \Delta)^*(w_{ij}) = \Delta^*(w^{s'}_{ij}).$$
Thus $w^s_{ij} - w^{s'}_{ij}\in \ker(\Delta^*)$ for $1\leq s,s'\leq 3$ and $1\leq i < j \leq 4$. 

It is sufficient to show that $(w^2_{13}-w^1_{13})^2(w^2_{14}-w^1_{14})^2(w^2_{23}-w^1_{23})(w^3_{14}-w^1_{14})\neq 0.$ Note that $(w^2_{13}-w^1_{13})^2 = -2 w^2_{13} w^1_{13}$
and $(w^2_{14}-w^1_{14})^2 = -2 w^2_{14} w^1_{14}$. Thus, it is equivalent to show $w^2_{13}w^1_{13}w^2_{14}w^1_{14}(w^2_{23}-w^1_{23})(w^3_{14}-w^1_{14})\neq 0$ which is computed in the following
\begin{align*}
&~w^2_{13}w^1_{13}w^2_{14}w^1_{14}(w^2_{23}-w^1_{23})(w^3_{14}-w^1_{14}) 
\\= &~ w^2_{13}w^1_{13}w^2_{14}w^1_{14}(w^2_{23}w^3_{14}-w^1_{23}w^3_{14}) \quad \quad \quad (\text{since~} (w^1_{14})^2=0)
\\= &~ w^2_{14}w^1_{14}w^3_{14} \{ (w^2_{13}w^2_{23})w^1_{13} - w^2_{13}(w^1_{13}w^1_{23}) \}
\\= &~ w^2_{14}w^1_{14}w^3_{14} \{ w^2_{12}(w^2_{23}-w^2_{13})w^1_{13}-w^2_{13}w^1_{12}(w^1_{23}-w^1_{13}) \} \quad (\text{using Proposition } \ref{Prop_cohom ring E23}(b))
\\= & ~ w^2_{14}w^1_{14}w^3_{14} w^2_{12}w^2_{23}w^1_{13}-
 w^2_{14}w^1_{14}w^3_{14} w^2_{12}w^2_{13}w^1_{13}-
  w^2_{14}w^1_{14}w^3_{14} w^2_{13}w^1_{12}w^1_{23}
   \\& \hspace{3cm} +w^2_{14}w^1_{14}w^3_{14} w^2_{13}w^1_{12}w^1_{13}
 \neq 0 \quad (\text{being basis elements}).
\end{align*} 
Then applying Proposition \ref{Prop_cup product example}, we get the desired result.
\end{proof}

\noindent In conclusion, the topological complexity of the above problem is given by $$\TC_{(2, 3)}[p: E \to B] = 6.$$

\sect{Motion planning of multiple robots for variable target states}\label{sec: variable target}

In this section, we first formulate the main problem of the paper: collision-free motion planning for $n$ robots, denoted $z_1, \dots, z_n$ visiting $r_1, \dots, r_n$ target points, respectively, in the presence of $m$ obstacles $o_1, \dots, o_m$ in $\mathbb{R}^d$, where $d \geq 2$. Then we derive a general upper bound for its sequential parametrized topological complexity. The inspection for lower bound using the cup length of associated cohomology algebra is discussed in subsequent sections.

To formulate the problem mathematically, we may assume $r_1 \leq r_2 \leq \cdots \leq r_n$. In fact, we can rearrange the values $r_1, \dots, r_n$ in non-decreasing order and rename the corresponding robots accordingly. We denote this tuple by $\br = (r_1,\dots , r_n)$. 
Consider the Fadell-Neuwirth fibration 
\begin{equation}\label{Eq_FNB}
p:E = F(\mathbb{R}^d, m+n) \to B = F(\mathbb{R}^d,m) 
\end{equation}
defined by $(o_1, \dots, o_m, z_1, \dots , z_n) \mapsto (o_1, \dots, o_m).$ Now onwards, by $E$ and $B$ we mean the configuration spaces $F(\mathbb{R}^d, m+n)$ and $F(\mathbb{R}^d,m) $, respectively.

Assume that there are $\ell$ many distinct numbers among  $r_1, \dots, r_n$, so that we have the following relation, $$r_1 = \cdots = r_{n_1}< r_{n_1+1} =  \cdots = r_{n_2}<\cdots < r_{n_{\ell-1}+1} = \cdots =r_{n_\ell}= r_{n}.$$ Note that $n_{\ell} = n$ and we fix $n_0= 0$.
For $u=1, 2, \ldots, \ell-1$, we consider the following fibrations:
\begin{equation}\label{Eq_pu}
p_{u}\colon E = F(\rr^d, m+n)\to F(\rr^d, m+n_u)
\end{equation}
defined by $(o_1, \dots, o_m, z_1, \dots, z_n) \mapsto (o_1, \dots, o_m, z_1, \dots, z_{n_u})$.
We define a subspace $E^{\br}_B$ of $E^{r_n}_B$ as follows: 
\begin{equation}\label{Eq_description of total space}
E^{\br}_B=\Big\{ (e_1,\ldots,e_{r_n})\in E^{r_n}_B ~:~ p_{u}(e_{r_{n_u}}) = p_{u}(e_{r_{n_u}+1})= \cdots = p_{u}(e_{r_n}),  ~ 1\leq u \leq \ell-1 \Big\}.
\end{equation}

An element in $E^{\br}_B$ represents $r_n$ many configuration points, say $e_1, \ldots, e_{r_n}$, where the configurations $e_s = (o_1, \ldots, o_m, z^s_1,\ldots, z^s_n)$ for $1\leq s\leq r_n$ satisfying the following relations:
\begin{enumerate}[(i)]
\item $o_j \neq o_{j'}$ for $1\leq j \neq j'\leq m$,
\item $o_j \neq z^s_i$ for $1\leq i\leq n$,
\item $z^s_i\neq z^s_{i'}$ for $1\leq i\neq i'\leq n$,
\item $z^s_{i} = z^{s'}_{i}$ for $r_{i}\leq s,s' \leq r_n, ~ 1\leq i\leq n$.
\end{enumerate}
Therefore, an element in $E^{\bar{\textbf{r}}}_B$ can be expressed as
\begin{equation*}
(o_1, \dots, o_m, z^1_{1}, \dots , z_1^{r_1}, \dots z^1_{n}, \dots , z_n^{r_n}) \in (\rr^d)^{R+m}, \quad {\text{where } R=\sum_{i=1}^n r_i,}
\end{equation*}
satisfying relations (i), (ii), and (iii).

Further, recall the fibration $\Pi_{r_n}: E^I_B \to E^{r_n}_B$ defined as $\alpha\mapsto \big(\alpha(t_1),\ldots, \alpha(t_{r_n})\big)$, where $0\leq t_1 < \cdots < t_{r_{n}}\leq 1$.
We consider a subspace $\EIBr$ of $E^I_B$ that fits in the following pullback diagram:
\begin{equation}\label{diagram pullback}
\xymatrix{
\EIBr \ar[rr]^{} \ar[dd]_{\Pi_{\br}} && E^I_B \ar[dd]^{\Pi_{r_n}} \\ \\
E_B^{\br} \ar@{^{(}->} [rr]_{\iota} && E^{r_n}_B
}
\end{equation}
Observe that an element $\alpha\in \EIBr$ can be written as: $$\alpha(t) = (o_1,\ldots,o_m,\alpha_1(t),\ldots,\alpha_n(t)), ~t\in I$$ and satisfies the following conditions:
\begin{enumerate}[(i)]
\item $o_j \neq o_{j'}$ for $1\leq j \neq j'\leq m$,
\item $\alpha_i(t) \neq \alpha_{i'}(t)$ for all $t\in I$ and $1\leq i\neq i'\leq n$,
\item $\alpha_i(t) \neq o_j$ for all $t\in I$, and $1\leq i\leq n,~ 1\leq j\leq m,$
\item $\alpha_i(t_{r_i}) = \alpha_i(t_{r_{i+1}}) = \cdots = \alpha_i(t_{r_n})$ for $1\leq i\leq n.$
\end{enumerate}
So, we can realize the fibration $\Pi_{\br}$ as:
\begin{equation}\label{Eq_pi r bar}
\Pi_{\br} : \EIBr \to E^{\br}_B, \quad \alpha \mapsto \Big(o_1, \dots, o_m, \alpha_1(t_1), \dots, \alpha_1(t_{r_1}), \dots, \alpha_n(t_1), \dots, \alpha_n(t_{r_n})\Big).
\end{equation}

\begin{definition}
The $\bar{\textbf{r}}$-th sequential parametrized topological complexity of the Fadell-Neuwirth fibration $p\colon E\to B$, denoted by $\TC_{\bar{\textbf{r}}}[p \colon E \to B]$, is defined as the sectional category of the fibration $\Pi_{\bar{\textbf{r}}}$, i.e.
$$\TC_{\bar{\textbf{r}}}[p \colon E \to B] := \secat(\Pi_{\bar{\textbf{r}}}).$$
\end{definition}

\noindent From the diagram (\ref{diagram pullback}), it is clear that $\TC_{\br}[p: E \to B] \leq \TC_{r_n}[p: E \to B]$. In the remainder of this section, we find an upper bound for $\TC_{\br}[p \colon E \to B]$.

\begin{proposition}\label{Prop_ubptc}
For the Fadell-Neuwirth fibration $p\colon E \to B$ as in \eqref{Eq_FNB}, we have $$\TC_{\bar{\textbf{r}}}[p \colon E \to B] \leq \displaystyle\sum^{n}_{i=1} {r_i} + m -1.$$
\end{proposition}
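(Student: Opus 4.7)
The plan is to follow the argument of Section~\ref{sec:problem}: bound the homotopy dimension of $E^{\br}_B$ and verify that it has sufficient connectivity, then feed both into the Svarc sectional-category inequality.

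To estimate $\hdim(E^{\br}_B)$, I would exhibit it as the top of a tower of locally trivial fibrations obtained by introducing the robots one at a time. For $0\le k\le n$, set $\br^{(k)}=(r_1,\dots,r_k)$ and let $Y_k:=E^{\br^{(k)}}_B$ denote the variant of \eqref{Eq_description of total space} involving only the first $k$ robots, so that $Y_0=B$ and $Y_n=E^{\br}_B$. The projection $\hat q_k\colon Y_k\to Y_{k-1}$ that forgets the trajectory of robot $k$ is a locally trivial fibration, and its fibre over a chosen point should be the product
\[
\prod_{s=1}^{r_k}\bigl(\rr^d\setminus S_s^{(k)}\bigr),\qquad |S_s^{(k)}|=m+k-1,
\]
where $S_s^{(k)}$ consists of the $m$ obstacles together with the positions of robots $1,\dots,k-1$ at the $s$-th time step. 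What makes the fibre a literal product is that $z_k^s$ and $z_k^{s'}$ for $s\neq s'$ are not required to differ (a single robot may revisit a site), so no constraint couples the different time steps of a single robot.

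Each factor $\rr^d\setminus S_s^{(k)}$ is homotopy equivalent to a wedge of $(d-1)$-spheres, hence has homotopy dimension $d-1$ and is $(d-2)$-connected. Combined with $\hdim(B)=(m-1)(d-1)$ and the $(d-2)$-connectedness of $B$, the standard homotopy-dimension and connectivity estimates for a fibration yield, by induction over $k$,
\[
\hdim\bigl(E^{\br}_B\bigr)\le\Bigl(\sum_{i=1}^n r_i+m-1\Bigr)(d-1),
\]
and show that $E^{\br}_B$ is itself $(d-2)$-connected. Invoking \cite[Theorem~5]{Sva66} exactly as in Section~\ref{sec:problem} then gives
\[
\secat(\Pi_{\br})<\frac{\hdim(E^{\br}_B)+1}{d-1}\le \sum_{i=1}^n r_i+m-1+\frac{1}{d-1},
\]
and the integrality of $\secat(\Pi_{\br})$ together with $1/(d-1)\le 1$ forces $\secat(\Pi_{\br})\le\sum_{i=1}^n r_i+m-1$.

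The main subtle step is the identification of the fibre of each $\hat q_k$ as a genuine product of punctured Euclidean spaces (and the verification that the topological type of this fibre is independent of the choice of base point in $Y_{k-1}$, which is where the ordering $r_1\le\cdots\le r_n$ is used). Once this decoupling across the $r_k$ stops of robot $k$ is in place, the homotopy-dimension and connectivity inductions are routine, and Svarc's inequality completes the proof.
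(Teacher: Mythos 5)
Your proof is correct and achieves the same bound, but by a genuinely different decomposition of $E^{\br}_B$. The paper applies a single forgetful fibration $\hat p\colon E^{\br}_B\to B$ (forgetting all robots at once) and then decomposes its fibre by grouping time steps into blocks during which the same subset of robots is still moving, producing the iterated factor $X^{r_{n_1}}\times X_1^{\,r_{n_2}-r_{n_1}}\times\cdots\times X_{\ell-1}^{\,r_{n_\ell}-r_{n_{\ell-1}}}$; the bound $\sum_i r_i+m-1$ then emerges from a telescoping sum $n_1 r_{n_1}+(n_2-n_1)r_{n_2}+\cdots$. You instead build a tower $E^{\br}_B=Y_n\to Y_{n-1}\to\cdots\to Y_0=B$ adding one robot at a time, with each stage having fibre a genuine product $\prod_{s=1}^{r_k}\bigl(\rr^d\setminus S_s^{(k)}\bigr)$ of punctured Euclidean spaces. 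Your observation that the product is uncoupled because $z_k^s$ and $z_k^{s'}$ need not differ (and that the ordering $r_1\le\cdots\le r_n$ guarantees each $z_i^s$ with $i<k$, $s\le r_k$ is already defined in $Y_{k-1}$) is exactly the right point, and it makes the additivity $\hdim(Y_k)\le\hdim(Y_{k-1})+r_k(d-1)$ transparent: the sum $\sum_i r_i$ appears directly instead of via the telescoping manipulation. Your building blocks (wedges of $(d-1)$-spheres) are also more elementary than the configuration-space fibres $X_u$ the paper invokes.

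One small imprecision worth flagging: what Schwarz's Theorem~5 requires is the connectivity of the \emph{fibre} of $\Pi_{\br}$, not of the base $E^{\br}_B$. The fibre of $\Pi_{\br}$ is homotopy equivalent to a product of loop spaces $\Omega X$, where $X=F(\rr^d\setminus\{o_1,\dots,o_m\},n)$ is the fibre of $p$ and is $(d-2)$-connected, and this is what produces the denominator $d-1$. Your statement that $E^{\br}_B$ is itself $(d-2)$-connected is true but not the hypothesis being used; you should replace it with the observation that $X$ is $(d-2)$-connected, as the paper does in Lemma~\ref{Lemma_general upper bound of TC}. With that substitution the argument is complete.
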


We prove the proposition using the following lemma. We denote the fibre of the fibration $p \colon E \to B$ in \eqref{Eq_FNB} by $X$. We also denote the fibre of the fibration $p_u\colon E\to F(\mathbb{R}^d, m+n_u)$ in \eqref{Eq_pu} by $X_u$ for $u=1, \dots, \ell-1$.

\begin{lemma}\label{Lemma_general upper bound of TC}
Let $p\colon E \to B$ be the Fadell-Neuwirth fibration \eqref{Eq_FNB} with fibre $X$. The $\bar{\textbf{r}}$-th sequential parametrized topological complexity
\begin{align*}
& \TC_{\bar{\textbf{r}}}[p\colon E \to B]< \frac{\hdim (E^{\bar{\textbf{r}}}_B) + 1}{d-1} \\
& \leq \frac{ r_{n_1} \hdim (X) + (r_{n_2}-r_{n_1}) \hdim (X_1) + \cdots + (r_{n_\ell}-r_{n_{\ell-1}}) \hdim (X_{\ell-1}) + \hdim (B) + 1}{d-1}.
\end{align*}
\end{lemma}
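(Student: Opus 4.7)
The lemma consists of two inequalities, which I will address in turn.

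For the first, $\TC_{\br}[p\colon E\to B] < (\hdim(E^{\br}_B)+1)/(d-1)$, my plan is to apply Schwarz's dimension-connectivity bound (\cite[Theorem 5]{Sva66}) to the fibration $\Pi_{\br}\colon \EIBr \to E^{\br}_B$, mirroring the argument carried out in the motivating example of Section~\ref{sec:problem}. The essential input is that the fibre $X$ of the Fadell--Neuwirth fibration $p$ is $(d-2)$-connected: this follows by iterating the Fadell--Neuwirth projections, since at each stage one removes finitely many points from $\mathbb{R}^d$, so each stratum is homotopy equivalent to a wedge of $(d-1)$-spheres whose connectivity is $d-2$. The dimension-connectivity bound then yields the stated strict inequality with denominator $d-1$.

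For the second inequality, the plan is to decompose $E^{\br}_B$ as the total space of an iterated tower of locally trivial fibrations and telescope the resulting homotopy-dimension estimates. Explicitly, setting $X_0:=X$ and $r_{n_0}:=0$, define for $u=0,1,\ldots,\ell$
\[
A_u := \bigl\{ (e_1,\ldots,e_{r_{n_u}}) \in E^{r_{n_u}}_B \,:\, \text{the constraints of }\br\text{ on indices}\leq n_u\text{ hold}\bigr\},
\]
so that $A_0 = B$ and $A_\ell = E^{\br}_B$. The forgetful map $A_u \to A_{u-1}$ is the natural projection dropping the last $r_{n_u}-r_{n_{u-1}}$ configurations. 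I would show that this is a locally trivial fibration with fibre $X_{u-1}^{r_{n_u}-r_{n_{u-1}}}$. The core observation is that for any $s$ with $r_{n_{u-1}} < s \leq r_{n_u}$, the robots in groups $1,\ldots,u-1$ are already pinned at positions prescribed by the point of $A_{u-1}$, while the remaining $n - n_{u-1}$ robots range freely (since for any such robot $r_i \geq r_{n_u}$, the equality constraints are inactive within this range); hence each $e_s$ is independently parametrized by a point of $X_{u-1}$.

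Telescoping the standard inequality $\hdim(\text{total}) \leq \hdim(\text{fibre}) + \hdim(\text{base})$ down the tower then yields
\[
\hdim(E^{\br}_B) \leq \hdim(B) + r_{n_1}\hdim(X) + \sum_{u=2}^{\ell}(r_{n_u}-r_{n_{u-1}})\hdim(X_{u-1}),
\]
which is the second inequality of the lemma. The step I expect to demand the most care is the fibre identification for $A_u \to A_{u-1}$ together with its local triviality: although the picture is directly analogous to the classical Fadell--Neuwirth fibration, the simultaneous bookkeeping across several groups of robots with distinct termination indices — and the need to verify that the local trivializations of the underlying Fadell--Neuwirth fibrations are compatible with the equality constraints on $E^{\br}_B$ — is the delicate part of the argument.
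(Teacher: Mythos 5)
Your proof is correct and follows essentially the same approach as the paper: the strict inequality rests on the $(d-2)$-connectivity of the Fadell--Neuwirth fibre $X$ together with the dimension--connectivity bound for sectional category, and the second inequality telescopes homotopy-dimension estimates through a tower of Fadell--Neuwirth-type fibrations. Your iterated tower $A_0=B\to A_1\to\cdots\to A_\ell=E^{\br}_B$ with fibre $X_{u-1}^{r_{n_u}-r_{n_{u-1}}}$ at stage $u$ makes explicit the factorization that the paper leaves implicit when it asserts the bound on $\hdim\big(\hat p^{-1}\{(o_1,\dots,o_m)\}\big)$; restricting your tower to the fibre of $\hat p\colon E^{\br}_B\to B$ recovers the paper's decomposition exactly, and the fibre identification you flag as delicate goes through just as you outline, since in each dropped configuration $e_s$ the robots indexed $\le n_{u-1}$ are pinned by the equality constraints while the others range freely. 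One small point, inherited from the paper rather than introduced by you: a literal application of \cite[Theorem 5]{Sva66} to $\Pi_{\br}$ would use the connectivity of the fibre of $\Pi_{\br}$ (a product of loop spaces of $X$, hence only $(d-3)$-connected) and would give denominator $d-2$; the stated denominator $d-1$ relies on the sharper dimension--connectivity estimate for (sequential) parametrized sectional category in the spirit of \cite{CohFW21,FarP22}, which tracks the connectivity of $X$ itself, and you do correctly identify the connectivity of $X$ as the essential input.
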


\begin{proof}
Recall 
\begin{align*}
X = p^{-1}\{(o_1, \dots, o_m)\} \cong F(\rr^d-\{o_1,\ldots,o_m\},n). 
\end{align*}
It follows that the connectivity of $X$ is $(d-2)$. Hence, using the definition of $\TC_{\bar{\textbf{r}}}[p\colon E \to B]$ and \cite[Theorem 5]{Sva66}, we get the first strict inequality. 
Further, consider a fibration 
\begin{equation}\label{Eq_hat p}
\hat{p}\colon E^{\bar{\textbf{r}}}_B\to B \quad \text{ defined as } (e_1, \dots, e_{r_n})\mapsto p(e_1).
\end{equation}
Observe that for the fibre $\hat{p}^{-1}\{(o_1,\ldots,o_m)\}$, we have $$\hdim(\hat{p}^{-1}\{(o_1,\ldots,o_m)\}) \leq \hdim(X^{r_{n_1}}) + \hdim \big(X^{(r_{n_2} - r_{n_1})}_1\big) + \cdots + \hdim \big( X^{(r_{n_\ell} - r_{n_{\ell - 1}})}_{\ell - 1}\big).$$ 
Thus, 
\begin{align*}
\hdim(E^{\bar{\textbf{r}}}_B) & \leq \hdim(\hat{p}^{-1}\{(o_1,\ldots,o_m)\}) +\hdim(B) \\
& \leq \hdim(X^{r_{n_1}}) + \hdim\big(X^{(r_{n_2} - r_{n_1})}_1\big) + \cdots + \hdim \big(X^{(r_{n_\ell} - r_{n_{\ell - 1}})}_{\ell - 1}\big) + \hdim(B)\\
& \leq r_{n_1} \hdim (X) + (r_{n_2} - r_{n_1}) \hdim (X_1)+ \cdots + (r_{n_\ell} - r_{n_{\ell-1}}) \hdim (X_{\ell-1}) + \hdim (B). 
\end{align*}
Hence, we get the desired result.
\end{proof}

\begin{proof}[Proof of Proposition \ref{Prop_ubptc}]
Note that $\hdim(X) = n(d-1)$, $\hdim(B) = (m-1)(d-1)$ and $\hdim(X_u) = (n-n_u)(d-1)$ for $u= 1, \dots, \ell-1$. Therefore,
\begin{align*}
&\TC_{\bar{\textbf{r}}}[p\colon E \to B] \\
& < \frac{(d-1)[n r_{n_1} + (n - n_1) (r_{n_2} - r_{n_1}) + \cdots + (n - n_{\ell-1}) (r_{n_\ell} - r_{n_{\ell-1}}) + (m-1)] + 1}{d-1}\\
& = n_1 r_{n_1} + (n_2 - n_1) r_{n_2} + \cdots + (n - n_{\ell-1}) r_{n_\ell} + m -1 + \frac{1}{d-1}\\
& = \displaystyle\sum^{n}_{i=1} {r_i} + m -1 + \frac{1}{d-1}.
\end{align*}
Since $d\geq2$, we have our result.
\end{proof}

\sect{Cohomology algebra of $H^{*}(E^{\br}_B)$}
\label{sec:cohomology}

In this section, we explore the cohomologies that further lead us to find the lower bound for $\TC_{\br}[p\colon E \to B]$ using the following proposition.

\begin{proposition}\label{Prop_lbptc}
Let $p\colon E \to B$ be the Fadell-Neuwirth fibration \eqref{Eq_FNB}. Consider the diagonal map $\Delta \colon E\to E^{\bar{\textbf{r}}}_B$ defined as $\Delta(e) = (e, \dots, e)$. If there exist cohomology classes $u_1, \dots, u_k\in \ker[\Delta^{\ast}\colon H^{\ast}(E^{\bar{\textbf{r}}}_B; R)\to H^{\ast}(E; R)]$ such that $u_1 \smile \cdots \smile u_k \neq 0$ in $H^{\ast}(E^{\bar{\textbf{r}}}_B; R)$, where $R$ is a coefficient ring. Then $$\TC_{\bar{\textbf{r}}}[p\colon E \to B] \geq k.$$
\end{proposition}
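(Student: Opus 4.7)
The plan is to replicate, in the general setting of an arbitrary tuple $\bar{\mathbf{r}} = (r_1, \dots, r_n)$, the argument used for the motivating example in Proposition \ref{Prop_cup product example}. The key idea is to replace the abstract fibration $\Pi_{\bar{\mathbf{r}}}$ with the diagonal $\Delta$ on the cohomological side by exhibiting a homotopy equivalence between $E$ and $E^{I_{\bar{\mathbf{r}}}}_B$ that makes a suitable triangle commute.

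First, I would define the constant-path map
$$c \colon E \to E^{I_{\bar{\mathbf{r}}}}_B, \qquad c(e)(t) = e \text{ for all } t \in I.$$
The first thing to verify is that $c$ actually lands in $E^{I_{\bar{\mathbf{r}}}}_B$, not just in $E^I_B$. Conditions (i)--(iii) listed after \eqref{Eq_pi r bar} hold automatically because $e \in E$, and condition (iv), namely $\alpha_i(t_{r_i}) = \cdots = \alpha_i(t_{r_n})$, is trivially true for constant paths. Next, I would check the commutativity
$$\Pi_{\bar{\mathbf{r}}} \circ c = \Delta,$$
which is immediate from the definition \eqref{Eq_pi r bar} since evaluating the constant path $c(e)$ at any collection of times returns the tuple $(e, \dots, e)$.

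Then I would argue that $c$ is a homotopy equivalence: a homotopy inverse is given by evaluation at $t = 0$, since $E^{I_{\bar{\mathbf{r}}}}_B$ deformation retracts onto its subspace of constant paths via the straight-line homotopy $H_s(\alpha)(t) = \alpha((1-s)t)$, which preserves all the defining conditions of $E^{I_{\bar{\mathbf{r}}}}_B$ (reparameterization does not destroy conditions (i)--(iv)). Consequently, the induced map $c^{\ast}$ on cohomology is an isomorphism, and applying cohomology to $\Pi_{\bar{\mathbf{r}}} \circ c = \Delta$ yields
$$\ker\bigl[\Pi_{\bar{\mathbf{r}}}^{\ast} \colon H^{\ast}(E^{\bar{\mathbf{r}}}_B; R) \to H^{\ast}(E^{I_{\bar{\mathbf{r}}}}_B; R)\bigr] = \ker\bigl[\Delta^{\ast} \colon H^{\ast}(E^{\bar{\mathbf{r}}}_B; R) \to H^{\ast}(E; R)\bigr].$$

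Finally, given classes $u_1, \dots, u_k \in \ker(\Delta^{\ast})$ with nonzero cup product in $H^{\ast}(E^{\bar{\mathbf{r}}}_B; R)$, the equality of kernels above places them in $\ker(\Pi_{\bar{\mathbf{r}}}^{\ast})$, and Schwarz's inequality (Lemma \ref{Lem_lower bound of secat}) applied to the fibration $\Pi_{\bar{\mathbf{r}}}$ gives $\secat(\Pi_{\bar{\mathbf{r}}}) \geq k$, i.e.\ $\TC_{\bar{\mathbf{r}}}[p \colon E \to B] \geq k$. There is no substantial obstacle here; the only step that requires care is the verification that the straight-line reparameterization homotopy respects the extra equality constraints (iv) that cut out $E^{I_{\bar{\mathbf{r}}}}_B$ inside $E^I_B$, but this is immediate since those constraints only involve values of $\alpha_i$ and reparameterizing the time variable cannot change which values are taken.
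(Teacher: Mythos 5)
Your proposal follows the paper's approach (constant-path map, commutative triangle, Schwarz's lemma), and the verifications that $c$ lands in $\EIBr$ and that $\Pi_{\br}\circ c=\Delta$ are correct. The gap is in the homotopy. The straight-line reparameterization $H_s(\alpha)(t)=\alpha((1-s)t)$ does \emph{not} preserve $\EIBr$: condition (iv) constrains $\alpha_i$ at the \emph{fixed} checkpoint times $t_{r_i},\dots,t_{r_n}$, so the corresponding condition on $H_s(\alpha)=\alpha\circ\phi_s$, with $\phi_s(t)=(1-s)t$, is $\alpha_i((1-s)t_{r_i})=\cdots=\alpha_i((1-s)t_{r_n})$, a constraint at \emph{new} times that (iv) for $\alpha$ does not control. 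For example, with $\br=(2,3)$ and schedule $0,\tfrac{1}{2},1$, let $\alpha_1$ run a nontrivial loop on $[\tfrac{1}{2},1]$: then $\alpha_1(\tfrac{1}{2})=\alpha_1(1)$, yet generically $\alpha_1(\tfrac{1}{4})\neq\alpha_1(\tfrac{1}{2})$, so $H_{1/2}(\alpha)\notin\EIBr$. Your justification --- ``reparameterizing the time variable cannot change which values are taken'' --- confuses a pointwise constraint at prescribed times with a constraint on the image of the path. In fact $c\colon E\to\EIBr$ is \emph{not} a homotopy equivalence when the $r_i$ are not all equal: together with the genuine equivalence $c'=j\circ c\colon E\to E^I_B$ (where $j\colon\EIBr\hookrightarrow E^I_B$ is the inclusion), the square \eqref{diagram pullback} would then be a homotopy pullback, which would force $\iota\colon E^{\br}_B\hookrightarrow E^{r_n}_B$ to be an equivalence, and it is not. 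The paper's own proof of Proposition \ref{Prop_cup product example} asserts the homotopy equivalence without justification and is subject to the same objection.

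The conclusion can nonetheless be recovered without that claim. Since $c'=j\circ c$ \emph{is} a homotopy equivalence (the reparameterization homotopy is valid on the full path space $E^I_B$, where (iv) imposes no constraint), $(c')^*=c^*\circ j^*$ is an isomorphism. The square \eqref{diagram pullback} gives $\Pi_{\br}^*\circ\iota^*=j^*\circ\Pi_{r_n}^*$, and $\iota^*\colon H^*(E^{r_n}_B)\to H^*(E^{\br}_B)$ is surjective, because by Proposition \ref{Prop_additive basis} the ring $H^*(E^{\br}_B)$ is generated by the classes $w^s_{ij}=q_s^*(w_{ij})$, each of which factors through $\iota$. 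Hence the image of $\Pi_{\br}^*$ lies in the image of $j^*$; given $u\in\ker\Delta^*$, writing $\Pi_{\br}^*(u)=j^*(v)$ yields $(c')^*(v)=c^*\Pi_{\br}^*(u)=\Delta^*(u)=0$, so $v=0$ by injectivity of $(c')^*$, hence $\Pi_{\br}^*(u)=0$. Schwarz's Lemma \ref{Lem_lower bound of secat} applied to the fibration $\Pi_{\br}$ then gives the claimed lower bound.
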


The proof of this proposition follows from an argument similar to that in the proof of Proposition \ref{Prop_cup product example}.
For $E = F(\rr^d, m+n)$, we recall the integral cohomology ring $H^\ast(E)$, as described in \cite[Chapter V, Theorem 4.2 and 4.3]{FadH01}.
\begin{lemma}\label{Lem_cohomology ring of E}
The integral cohomology ring $H^\ast(E)$ contains cohomology classes $w_{ij}$ of degree $(d-1)$ , where $1\leq i< j\leq m+n,$ which multiplicatively generate $H^*(E)$ and satisfy the following relations: $$(w_{ij})^2=0 \quad \mbox{and}\quad w_{ip}w_{jp}= w_{ij}(w_{jp}-w_{ip})\quad \text{for all }\,  i<j<p.$$
\end{lemma}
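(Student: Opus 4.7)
The plan is to construct the classes $w_{ij}$ explicitly as pullbacks along Gauss-type maps and then verify the two stated relations together with multiplicative generation by exploiting the inductive structure of the iterated Fadell--Neuwirth fibrations; this mirrors the classical strategy of Cohen--Taylor and F.\ Cohen, whose full account appears in \cite{FadH01}. For each ordered pair $1 \leq i < j \leq m+n$, define
\begin{equation*}
\phi_{ij}\colon F(\mathbb{R}^d, m+n) \to S^{d-1}, \qquad (x_1,\ldots,x_{m+n}) \mapsto \frac{x_i - x_j}{\|x_i - x_j\|},
\end{equation*}
and set $w_{ij} := \phi_{ij}^*(u) \in H^{d-1}(E)$, where $u \in H^{d-1}(S^{d-1})$ is the fundamental class. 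The vanishing $(w_{ij})^2 = 0$ is then immediate, since $w_{ij}^2 = \phi_{ij}^*(u \smile u)$ and $u \smile u$ lies in $H^{2(d-1)}(S^{d-1}) = 0$ for $d \geq 2$.

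For the Arnold-type relation $w_{ip}w_{jp} = w_{ij}(w_{jp} - w_{ip})$, observe that the three classes involved are all pulled back along the coordinate projection $\pi_{ijp}\colon F(\mathbb{R}^d, m+n) \to F(\mathbb{R}^d, 3)$ onto the $(i,j,p)$-factors, so it suffices to establish the identity in $H^*(F(\mathbb{R}^d,3))$. I would compute that ring explicitly via the Fadell--Neuwirth fibration $F(\mathbb{R}^d,3) \to F(\mathbb{R}^d,2) \simeq S^{d-1}$, whose fibre $\mathbb{R}^d \setminus \{\text{two points}\}$ is homotopy equivalent to $S^{d-1} \vee S^{d-1}$. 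A direct description of this small configuration space exhibits the Arnold relation as the unique quadratic syzygy among the three generators in degree $2(d-1)$.

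For the multiplicative generation statement, apply Leray--Hirsch iteratively to the tower of Fadell--Neuwirth fibrations $F(\mathbb{R}^d,k+1) \to F(\mathbb{R}^d,k)$ for $k = 2, \ldots, m+n-1$. The fibre $\mathbb{R}^d \setminus \{k \text{ points}\}$ is homotopy equivalent to $\bigvee_{i=1}^{k} S^{d-1}$, and the restrictions of $w_{1,k+1},\ldots,w_{k,k+1}$ to this fibre form a basis for its reduced cohomology, because $\phi_{i,k+1}$ restricted to a small sphere around the $i$-th removed puncture has degree $\pm 1$ and is null-homotopic on the remaining summands. Leray--Hirsch then presents $H^*(F(\mathbb{R}^d,k+1))$ as a free $H^*(F(\mathbb{R}^d,k))$-module on $\{1, w_{1,k+1}, \ldots, w_{k,k+1}\}$, and induction on $k$ closes the argument.

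The step I expect to be the principal obstacle is verifying the Leray--Hirsch hypothesis at each stage, namely that the classes $w_{i,k+1}$ do restrict to a basis of the fibre's cohomology. This reduces to a geometric degree computation for the Gauss maps $\phi_{i,k+1}$ on small spheres around the removed punctures; while conceptually transparent, it requires careful bookkeeping of orientations (and is precisely where graded-commutativity versus characteristic play off one another when $d$ is even versus odd).
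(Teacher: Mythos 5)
The paper does not give a proof of this lemma: it is cited directly from Fadell--Husseini \cite[Chapter V, Theorems 4.2 and 4.3]{FadH01}, and indeed the paper already uses your exact description $w_{ij}=\phi_{ij}^*(u)$ in Section~3. Your reconstruction via Gauss maps, the vanishing $u^2=0$ in $H^*(S^{d-1})$, reduction of the Arnold relation to $F(\mathbb{R}^d,3)$ by naturality, and iterated Leray--Hirsch along the Fadell--Neuwirth tower is precisely the classical argument carried out there, so the route matches.

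The one place where your write-up stops short of a proof is the Arnold relation itself. Leray--Hirsch for $F(\mathbb{R}^d,3)\to F(\mathbb{R}^d,2)$ shows $H^{2(d-1)}(F(\mathbb{R}^d,3))$ is free of rank two on $\{w_{12}w_{13},\,w_{12}w_{23}\}$, which only tells you $w_{13}w_{23}=a\,w_{12}w_{13}+b\,w_{12}w_{23}$ for some $a,b\in\mathbb{Z}$; calling this ``the unique quadratic syzygy'' names the target rather than establishing $a=-1$, $b=1$. Pinning down those coefficients requires an extra input --- for instance, restricting along the inclusions $F(\mathbb{R}^d,2)\hookrightarrow F(\mathbb{R}^d,3)$ obtained by sending the third point to infinity in two different directions, or computing the three Gauss maps on explicit $(d-1)$-cycles --- and this is exactly the orientation bookkeeping you identify as the principal subtlety. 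Everything else (the null-homotopy of $\phi_{i,k+1}$ away from the $i$-th puncture, the degree-$\pm1$ claim on the $i$-th wedge summand, and multiplicative generation by induction on $k$) is sound.
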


Remark that the above lemma is applicable for any configuration space of $\rr^d$.

\begin{proposition}\label{Prop_relations on cohomology classes}
The integral cohomology ring $H^*(E_B^{\br})$ contains cohomology classes $w^s_{ij}$ of degree $(d-1)$, where $1\leq s \leq r_n$ and $1\leq i < j \leq m+n$, satisfying the following relations: 
\begin{enumerate}[(a)]
\item $(w_{ij}^s)^2=0 \quad \text{for } 1\leq s \leq r_n \text{ and } 1\leq i < j \leq m+n$.
\item $w_{ip}^s w_{jp}^s= w_{ij}^s(w_{jp}^s-w_{ip}^s) \quad \text{for } 1\leq s \leq r_n \text{ and } 1\leq i < j <p \leq m+n.$    
\item $w_{ij}^s = w_{ij}^{s'} \quad \text{for } 1\leq s, s' \leq r_n \text{ and } 1\leq i < j \leq m$,
\item $w^s_{ij} = w^{s'}_{ij} \quad \text{for }  r_{n_u}\leq s, s'\leq r_n \text{ and } 1\leq i \leq m + n_{u}-1, ~ m+n_{u-1} +1\leq j \leq m + n_u,$ where $u= 1, \ldots, \ell-1$.
\end{enumerate}
\end{proposition}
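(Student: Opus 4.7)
The plan is to construct the classes $w_{ij}^s$ as pullbacks of the generators of $H^*(E)$ via suitable projection maps, and then verify the four relations (a)--(d) by analyzing how these projections interact with the maps $\phi_{ij}\colon E \to S^{d-1}$ whose pullbacks yield the $w_{ij}$.

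First, for each $1 \le s \le r_n$, I would introduce the projection map $q_s\colon E_B^{\br} \to E$ defined by $(e_1,\ldots,e_{r_n}) \mapsto e_s$, and set $w_{ij}^s := q_s^*(w_{ij})$. This produces cohomology classes of the correct degree $d-1$ in $H^*(E_B^{\br})$. Relations (a) and (b) are then immediate, since $q_s^*$ is a ring homomorphism and the corresponding relations hold in $H^*(E)$ by Lemma \ref{Lem_cohomology ring of E}.

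For relations (c) and (d), the key observation is that $w_{ij} = \phi_{ij}^*(u)$, where $\phi_{ij}(x_1,\ldots,x_{m+n}) = (x_i - x_j)/\lVert x_i - x_j\rVert$ and $u$ is a generator of $H^{d-1}(S^{d-1})$. Consequently $w_{ij}^s = w_{ij}^{s'}$ whenever $\phi_{ij}\circ q_s = \phi_{ij}\circ q_{s'}$, which holds as soon as the $i$-th and $j$-th entries of $e_s$ and $e_{s'}$ agree for every element $(e_1,\ldots,e_{r_n}) \in E_B^{\br}$. I would verify this coincidence by tracking coordinates via the description \eqref{Eq_description of total space}. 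For (c), when $1 \le i < j \le m$ both entries are the obstacle positions $o_i, o_j$, which are identical across all $s$, so every pair $s, s'$ satisfies the required equality. For (d), when $j \in [m + n_{u-1}+1, m+n_u]$ the $j$-th coordinate equals $z_{j-m}^s$ with $r_{j-m} = r_{n_u}$, while for $i \le m + n_u - 1$ the $i$-th coordinate is either an obstacle or $z_{i-m}^s$ with $r_{i-m} \le r_{n_u}$; hence both entries stabilize once $s, s' \ge r_{n_u}$.

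The main care required lies in the index bookkeeping of relation (d): one has to verify that the lower bound $r_{n_u}$ imposed on $s, s'$ is exactly the threshold beyond which every coordinate in the relevant range becomes independent of the superscript, and this uses precisely that the chosen ordering places the robots with $r_i = r_{n_u}$ in the contiguous block $[n_{u-1}+1, n_u]$. Beyond this combinatorial check, the argument is simply naturality of pullback under $\phi_{ij}$ combined with functoriality of $q_s^*$.
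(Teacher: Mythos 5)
Your proposal is correct and follows essentially the same route as the paper: define $q_s\colon E_B^{\br}\to E$ by projection to the $s$-th factor, set $w_{ij}^s := q_s^*(w_{ij})$, deduce (a) and (b) from Lemma \ref{Lem_cohomology ring of E} via functoriality, and deduce (c) and (d) from the defining constraints of $E_B^{\br}$ in \eqref{Eq_description of total space}. The paper compresses the verification of (c) and (d) into a single sentence, whereas you make the mechanism explicit (that $w_{ij}$ factors through $\phi_{ij}$, so equality of the $i$-th and $j$-th coordinates of $q_s$ and $q_{s'}$ suffices), but this is a clarification, not a different method.
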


\begin{proof}
For $1\leq s \leq r_n$, consider the projection map $q_s : E_B^{\br} \to E$ defined as 
$$(e_1, \ldots, e_{r_n})\mapsto e_s.$$
One can realize an element $e_s = (o_1, \ldots, o_m, z^s_1, \ldots, z^s_n)$ in $E$ as $(x_1, \ldots, x_{m+n})$ where $x_i = o_i$ for $1\leq i\leq m$ and $x_i = z^s_{i-m}$ for $m+1\leq i \leq m+n$. Therefore, the cohomology class $w_{ij}\in H^{d-1}(E)$ induces a cohomology class $w^s_{ij}$ in $H^{d-1}(E_B^{\br})$ defined as $$w^s_{ij} := (q_{s})^*(w_{ij}).$$
So, the result follows from Lemma \ref{Lem_cohomology ring of E} and the relations in \eqref{Eq_description of total space}.
\end{proof}

For $p\geq 0$, we consider $I = (i_1, i_2, \dots , i_p)$ and $J = (j_1, j_2, \dots , j_p)$ where $i_b, j_b \in \{1,\dots, m +n\}$ for $b = 1,\dots, p$. We say $I<J$ if and only if $i_b< j_b$ for all $b=1,\dots, p$. Moreover, $J$ is called increasing if and only if $j_1 < \cdots < j_p$.  
We denote the cohomology class $w^s_{i_1j_1} w^s_{i_2j_2} \cdots w^s_{i_pj_p}\in H^{p(d-1)}(E^{\br}_B)$ by $w^s_{IJ}$, where $I$ and $J$  are any $p$-tuple with $I<J$ and $1\leq s \leq r_n$, i.e, $$w^s_{IJ} := w^s_{i_1j_1} w^s_{i_2j_2} \cdots w^s_{i_pj_p}.$$
If $p=0$, then we set $w^s_{IJ} = 1$.
Further, for $u =0, 1, \ldots, \ell-1$, we consider 
\begin{equation}\label{Eq_w^a}
\bar{w}^u := \prod^{r_{n_{u+1}}}_{s = r_{n_{u}}+1} w^s_{I_sJ_s}; \text{ where } I_s < J_s ~~(\text { assume } r_{n_0} = 0).
\end{equation}
It follows that $\bar{w}^u\in H^{D}(E^{\br}_B)$, where $D = \displaystyle\sum^{r_{n_u+1}}_{s=r_{n_u}+1} |I_s|(d-1)$, where $|I_s|$ denotes the length of the tuple $I_s$. Clearly, for each fixed $u\in \{0,\ldots,\ell-1\}$, the cohomology classes $\bar{w}^u$ depends on the lengths of $I_s$. Moreover, for all $1\leq s\leq r_n$, the cohomology classes $w^s_{IJ}$ are all equal to $w_{IJ}$ whenever $J$ takes values in $\{2, \ldots, m\}$, and we denote this class by $\bar{w}$ in $H^{|J|}(E^{\br}_B)$.

\begin{proposition}\label{Prop_additive basis}
Let the tuples $J, J_1, J_2, \ldots, J_{r_n}$ be increasing such that $J_s$ take values in $\{m + n_u + 1, \ldots, m + n\}$ whenever $r_{n_u}+1\leq s\leq r_{n_{u+1}}$ for $u = 0,1,\ldots, \ell-1$. Then an additive basis of $H^*(E^{\br}_B)$ is formed by the following type of cohomology classes: $$\bar{w} \bar{w}^0 \bar{w}^1 \cdots \bar{w}^{\ell-1},$$ where $\bar{w}^u$ is defined in \eqref{Eq_w^a} for $u=0, 1, \dots, \ell-1$ and $\bar{w}=w_{IJ}$ when $J$ takes values in $\{2, \dots, m\}$.
\end{proposition}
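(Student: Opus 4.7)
The plan is to realize $E^{\br}_B$ as the total space of an iterated tower of Fadell-Neuwirth-type fibrations, and then to apply the Leray-Hirsch theorem at each stage. I would construct a sequence $E_0 \leftarrow E_1 \leftarrow \cdots \leftarrow E_{r_n}$ with $E_0 = B$ and $E_{r_n} = E^{\br}_B$, where $E_s$ consists of the tuples $(e_1, \ldots, e_s)$ satisfying the first three defining conditions of $E^{\br}_B$ restricted to the first $s$ stops together with condition (iv) that enforces $z^{s'}_i = z^{s''}_i$ whenever $r_i \leq s', s'' \leq s$. If $u(s)$ denotes the phase index with $r_{n_{u(s)}} < s \leq r_{n_{u(s)+1}}$ (setting $r_{n_0} := 0$), then the forgetful map $E_s \to E_{s-1}$ dropping $e_s$ is a locally trivial fibration whose fiber is isomorphic to
$$X_{u(s)} = F\bigl(\mathbb{R}^d \setminus Q_{u(s)},\; n - n_{u(s)}\bigr),$$
where $|Q_{u(s)}| = m + n_{u(s)}$ consists of the obstacles together with the final positions of the robots already completed by stop $s-1$.

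For each stage of the tower I would verify the Leray-Hirsch hypothesis. By Lemma \ref{Lem_cohomology ring of E} applied to the punctured Euclidean space, $H^*(X_{u(s)})$ admits an additive basis given by products $w_{i_1 j_1}\cdots w_{i_p j_p}$ in which the $j_b$'s form an increasing sequence in $\{m+n_{u(s)}+1, \ldots, m+n\}$ and each $i_b < j_b$. These fiber classes are exactly the fiber-restrictions of the global classes $w^s_{ij} = q_s^*(w_{ij}) \in H^{d-1}(E^{\br}_B)$, and hence they extend to $H^*(E_s)$. The Leray-Hirsch theorem then yields an $H^*(E_{s-1})$-module isomorphism
$$H^*(E_s) \cong H^*(E_{s-1}) \otimes H^*(X_{u(s)}).$$
Iterating down the tower, together with the standard additive basis of $H^*(B) = H^*(F(\mathbb{R}^d, m))$ in which the $J$-tuples take values in $\{2, \ldots, m\}$, produces an additive basis of $H^*(E^{\br}_B)$ consisting of products $\bar{w} \cdot \prod_{s=1}^{r_n} w^s_{I_s J_s}$. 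Grouping the inner product over $s$ according to phases then reproduces exactly the classes $\bar{w}\,\bar{w}^0\,\bar{w}^1 \cdots \bar{w}^{\ell-1}$ of the statement.

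The main obstacle will be to rigorously justify the Leray-Hirsch step. One must confirm that each forgetful map $E_s \to E_{s-1}$ is a genuine locally trivial fibration: this is a Fadell-Neuwirth-type statement with a parameter-dependent puncture set (the obstacles and the already-fixed robot positions both vary over $E_{s-1}$), and the classical construction of \cite{FadN62} adapts once one identifies the correct set of reserved punctures at each stage. One also needs to verify that the fiber-restrictions of the classes $w^s_{ij}$ precisely realize the Fadell-Neuwirth-Cohen basis of $H^*(X_{u(s)})$ with the claimed range of indices, which is an exercise in bookkeeping against Proposition \ref{Prop_relations on cohomology classes}. With both points in place, the multiplicativity of the iterated Leray-Hirsch isomorphisms down the tower produces the basis as stated.
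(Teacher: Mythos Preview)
Your proposal is correct and uses the same core tool---the Leray--Hirsch theorem---as the paper, but organizes the argument differently. The paper applies Leray--Hirsch once, to the single fibration $\hat{p}\colon E^{\br}_B \to B$ of \eqref{Eq_hat p}, and asserts that the fibre has cohomology $H^*(X^{r_{n_1}}) \otimes H^*(X_1^{r_{n_2}-r_{n_1}}) \otimes \cdots \otimes H^*(X_{\ell-1}^{r_{n_\ell}-r_{n_{\ell-1}}})$ via K\"unneth; the basis then drops out directly. Your tower $B = E_0 \leftarrow E_1 \leftarrow \cdots \leftarrow E_{r_n} = E^{\br}_B$ instead adds one stop at a time, each step being a Fadell--Neuwirth fibration with fibre $X_{u(s)}$, and iterates Leray--Hirsch along the tower. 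Your route is slightly longer but more self-contained: the fibre of $\hat{p}$ is not literally a product (the puncture set for phase $u+1$ depends on the phase-$u$ coordinates), so the paper's K\"unneth step is itself an implicit iterated Leray--Hirsch on the fibre, which your tower makes explicit. Either way one lands on the same basis; the paper's version is terser, yours is a bit more transparent about why the tensor decomposition holds.
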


\begin{proof}
We use Leray-Hirsch theorem to the fibration $\hat{p}\colon E^{\br}_B\to B$ defined in \eqref{Eq_hat p}.  The cohomology classes $w^s_{ij}$ with $1\leq i< j \leq m$ originate from the base $B$ of this fibration $\hat{p}$. Moreover, using the cohomology algebra of $B$ (see Lemma \ref{Lem_cohomology ring of E}), we say that an additive basis of $H^*(B)$ consists of the cohomology classes $w_{IJ}$, where $J$ is increasing and takes values in $\{2,...,m\}$. 

For $u=0,\ldots,\ell-1$, recall the fibre $$X_u\cong F(\rr^d-\{o_1,\ldots,o_m,z_1,\ldots,z_{n_u}\}, n-n_u)$$ of the fibration $p_u\colon E\to F(\rr^d,m+n_u)$ defined in \eqref{Eq_FNB}  and \eqref{Eq_pu}.
Remark that $p_0 = p$ and $X_0 = X$.
Using the known results about the cohomology algebra of configuration spaces \cite[Chapter V, Theorem 4.2 and 4.3]{FadH01}, each $H^*(X_u)$ is free and additively generated by the restriction of the cohomology classes $w_{I'J'}$ over the fibre $X_u$ of $p_u$, where $J'$ is increasing and takes values in $\{m+n_u+1,\ldots, m+n\}$. Therefore, for the fibre $\hat{p}^{-1}\{(o_1,\dots, o_m)\}$, we have $$H^*(\hat{p}^{-1}\{(o_1, \dots, o_m)\}) \cong H^*(X^{r_{n_1}}) \otimes H^*(X^{(r_{n_2} - r_{n_1})}_1) \otimes \cdots \otimes H^*(X^{(r_{n_\ell} - r_{n_{\ell - 1}})}_{\ell - 1}).$$
Further, applying the K\"unneth theorem, it follows that the restriction of the family of the cohomology classes $\bar{w}^u=\displaystyle\prod^{r_{n_{u+1}}}_{s = r_{n_{u}}+1} w^s_{I_sJ_s}$ onto the space $X^{(r_{n_{u+1}}-r_{n_u})}_u$ form a free basis of $H^*(X^{(r_{n_{u+1}}-r_{n_u})}_u)$, provided each $J_s$ is increasing and takes values in $\{m+n_u+1,\ldots,m+n\}$. Therefore, the restrictions of the family of classes  $\bar{w}^0 \bar{w}^1 \cdots \bar{w}^{\ell-1}$ onto the fibre $\hat{p}^{-1}\{(o_1,\ldots,o_m)\}$ form an additive basis of the cohomology $H^*(\hat{p}^{-1}\{(o_1, \dots, o_2)\})$. Hence, applying the Leray-Hirsch theorem, we obtain an additive basis of $H^*(E^{\br}_B)$ given by the cohomology classes of the form $\bar{w} \bar{w}^0 \bar{w}^1 \cdots \bar{w}^{\ell-1}.$
\end{proof}

Recall the diagonal map $\Delta\colon E\to E^{\br}_B$ defined above in Proposition \ref{Prop_lbptc}. The following proposition follows from the definition of cohomology classes $w^s_{ij}\in H^*(E^{\br}_B)$.

\begin{proposition}\label{Prop_kernel of diagonal map}
For the homomorphism $\Delta^*\colon H^*(E^{\br}_B)\to H^*(E)$ induced by the diagonal map $\Delta\colon E\to E^{\br}_B$, the kernel of $\Delta^*$ contains the following cohomology classes $$w^s_{ij} - w^{s'}_{ij},$$ where $1\leq s, s'\leq r_n$ and $1\leq i<j\leq m+n$.
\end{proposition}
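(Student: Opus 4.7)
The plan is to reduce the statement to the basic functoriality of induced maps in cohomology, using the definition of $w^s_{ij}$ given in Proposition~\ref{Prop_relations on cohomology classes}. By that construction, $w^s_{ij} = q_s^*(w_{ij})$, where $q_s : E^{\br}_B \to E$ is the $s$-th projection $(e_1, \dots, e_{r_n}) \mapsto e_s$, and $w_{ij} \in H^{d-1}(E)$ is the generator from Lemma~\ref{Lem_cohomology ring of E}. So the statement $w^s_{ij} - w^{s'}_{ij} \in \ker \Delta^*$ is equivalent to saying that $\Delta^* \circ q_s^* = \Delta^* \circ q_{s'}^*$ on each generator $w_{ij}$.

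First, I would record the key observation that $q_s \circ \Delta = \mathrm{id}_E$ for every $s \in \{1, \dots, r_n\}$. Indeed, for any $e \in E$, the diagonal sends $e$ to $(e, e, \dots, e) \in E^{\br}_B$ (note that this tuple does lie in $E^{\br}_B$, since all the constraints in \eqref{Eq_description of total space} are trivially satisfied on the diagonal), and then $q_s$ picks out the $s$-th coordinate, which is $e$.

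Applying contravariant functoriality, this gives
\[
\Delta^*(w^s_{ij}) \;=\; \Delta^*\bigl(q_s^*(w_{ij})\bigr) \;=\; (q_s \circ \Delta)^*(w_{ij}) \;=\; \mathrm{id}_E^*(w_{ij}) \;=\; w_{ij},
\]
and the right-hand side is manifestly independent of $s$. Consequently,
\[
\Delta^*(w^s_{ij} - w^{s'}_{ij}) \;=\; w_{ij} - w_{ij} \;=\; 0,
\]
which establishes the claim. There is no genuine obstacle here; the proposition is a direct consequence of the fact that $\Delta$ is a section of each $q_s$, so it could in principle be absorbed into the definition-tracking of Proposition~\ref{Prop_relations on cohomology classes}, but stating it as a separate proposition is convenient because in the next sections one will systematically hunt for nonzero cup products among the classes $w^s_{ij} - w^{s'}_{ij}$ in order to apply Proposition~\ref{Prop_lbptc} and derive the lower bound on $\TC_{\br}[p : E \to B]$.
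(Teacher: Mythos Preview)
Your argument is correct and matches the paper's approach: the paper states that the proposition follows directly from the definition of $w^s_{ij}$, and in the motivating example (proof of Proposition~\ref{Prop_cup product example} and the subsequent discussion) carries out exactly the computation $\Delta^*(w^s_{ij}) = (q_s\circ \Delta)^*(w_{ij}) = (q_{s'}\circ \Delta)^*(w_{ij}) = \Delta^*(w^{s'}_{ij})$ that you give.
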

\noindent We keep the same notation for the cohomology classes of $H^{*}(E^{\br}_B)$ in the subsequent sections.

\sect{Topological complexity in odd-dimensional Euclidean space}\label{sec:odd}
In this section, we investigate the main problem for the odd-dimensional case, as stated in Section \ref{sec: variable target}.  The following theorem is one of the main results of our paper.
\begin{theorem}\label{Th_odd case}
The $\bar{\textbf{r}}$-sequential parametrized topological complexity of the Fadell-Neuwirth fibration \eqref{Eq_FNB} is given by $$\TC_{\br}[p\colon F(\mathbb{R}^d, n+ m) \to F(\mathbb{R}^d, m)] =\sum^{n}_{i=1} {r_i} + m -1,$$
where $d\geq 3$ is odd, $n\geq 1$, and $m\geq 2$.
\end{theorem}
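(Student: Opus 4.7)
The strategy is to combine the upper bound already obtained in Proposition \ref{Prop_ubptc} with a matching cup-length lower bound, along the lines of the motivating example of Section \ref{sec:problem}. Writing $R:=\sum_{i=1}^n r_i$, Proposition \ref{Prop_ubptc} gives $\TC_{\br}[p\colon E\to B]\leq R+m-1$, so, by Proposition \ref{Prop_lbptc}, it suffices to exhibit a cup product of $R+m-1$ classes in $\ker(\Delta^*)$ that is nonzero in $H^*(E^{\br}_B;\mathbb{Z})$.

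Motivated by the computation at the end of Section \ref{sec:problem}, I would consider
\[
\omega := \prod_{j=2}^{m}\bigl(w^2_{j,\,m+1}-w^1_{j,\,m+1}\bigr) \cdot \prod_{i=1}^{n}\Bigl[\bigl(w^2_{1,\,m+i}-w^1_{1,\,m+i}\bigr)^2\prod_{s=3}^{r_i}\bigl(w^s_{1,\,m+i}-w^1_{1,\,m+i}\bigr)\Bigr],
\]
a product of $(m-1)+\sum_{i=1}^n r_i = R+m-1$ factors, each lying in $\ker(\Delta^*)$ by Proposition \ref{Prop_kernel of diagonal map}. Because $d$ is odd, the generators commute and satisfy $(w^s_{ab})^2=0$, so $\bigl(w^2_{1,m+i}-w^1_{1,m+i}\bigr)^2 = -2\,w^1_{1,m+i}w^2_{1,m+i}$, and each further factor $(w^s_{1,m+i}-w^1_{1,m+i})$ with $s\geq 3$ only adjoins $w^s_{1,m+i}$ (its other summand is killed by the $w^1_{1,m+i}$ already present). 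The $i$-th robot bracket therefore collapses to $-2\prod_{s=1}^{r_i}w^s_{1,m+i}$, giving
\[
\omega = (-2)^n\prod_{j=2}^{m}\bigl(w^2_{j,m+1}-w^1_{j,m+1}\bigr)\prod_{i=1}^{n}\prod_{s=1}^{r_i}w^s_{1,m+i}.
\]

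To show $\omega\neq 0$, I would expand in the additive basis of Proposition \ref{Prop_additive basis}. Writing the obstacle product as $\sum_{S\subseteq\{2,\ldots,m\}}(-1)^{m-1-|S|}\prod_{j\in S}w^2_{j,m+1}\prod_{j\notin S}w^1_{j,m+1}$ and combining with the robot-$1$ bracket $\prod_{s=1}^{r_1}w^s_{1,m+1}$, each summand produces, at $s=1$ and $s=2$, a product $\prod_{j\in T_s}w^s_{j,m+1}$ of classes sharing the second index $m+1$, which must be rewritten using the Arnold relation $w^s_{ip}w^s_{jp}=w^s_{ij}(w^s_{jp}-w^s_{ip})$ of Proposition \ref{Prop_relations on cohomology classes}(b). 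Every remaining factor $w^s_{1,m+i}$ (with $i\geq 2$, or $i=1$ and $s\geq 3$) already lies in a distinct basis slot and contributes unambiguously to the appropriate block $\bar w^u$ of Proposition \ref{Prop_additive basis}. I would then target a specific \emph{top} basis monomial---for instance, the one whose base component $\bar w$ is $w_{12}w_{13}\cdots w_{1m}$ and whose $s=1,2$ pieces encode a chosen canonical Arnold reduction---and verify that its total coefficient is a nonzero multiple of $2^n$.

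The main obstacle is the bookkeeping of this last expansion: the two \emph{crowded} rows at $s=1,2$ produce many terms after Arnold reduction, and one must control potential cancellations across the $2^{m-1}$ summands of the obstacle expansion. A clean way to organize this is by induction on $m$: peel off the outermost obstacle factor $(w^2_{2,m+1}-w^1_{2,m+1})$ to reduce to the $(m-1)$-obstacle problem, taking the base case $m=2$ from the explicit four-term calculation at the end of Section \ref{sec:problem}. Once $\omega\neq 0$ is established, Proposition \ref{Prop_lbptc} yields $\TC_{\br}[p\colon E\to B]\geq R+m-1$, which together with the upper bound gives the asserted equality.
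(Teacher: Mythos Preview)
Your overall strategy coincides with the paper's: combine the upper bound of Proposition~\ref{Prop_ubptc} with a cup-length lower bound via Proposition~\ref{Prop_lbptc}. Your choice of kernel classes is organized a bit differently from the paper's Proposition~\ref{Prop_odd lower bound}---you take a squared factor $(w^2_{1,m+i}-w^1_{1,m+i})^2$ for each robot, whereas the paper groups factors into blocks $x, x^0,\ldots,x^{\ell-1}$ indexed by the level sets of $\br$---but both simplifications land on exactly the same expression
\[
(-2)^n\prod_{j=2}^{m}(w^2_{j,m+1}-w^1_{j,m+1})\;\prod_{i=1}^{n}\prod_{s=1}^{r_i} w^s_{1,m+i},
\]
so from that point the two arguments are identical.

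Where your proposal is genuinely incomplete is the nonvanishing step, which you rightly flag as the main obstacle but do not resolve. Your suggested induction on $m$ is not grounded as written: the ``base case'' at the end of Section~\ref{sec:problem} treats only $m=n=2$, $\br=(2,3)$, not arbitrary $n$ and $\br$, so it cannot serve as the anchor of an induction on $m$ alone. The paper proceeds directly instead: it isolates the factor
\[
\Bigl(\sum_{I,I'}(-1)^{|I'|}w^1_{IJ}w^2_{I'J'}\Bigr)\,w^1_{1(m+1)}w^2_{1(m+1)}
\]
coming from the obstacle product together with the two crowded rows $s=1,2$, and by iterated Arnold reduction (Proposition~\ref{Prop_relations on cohomology classes}(b)) shows that the basis monomial with base part $w_{12}w_{23}\cdots w_{2m}$ (not $w_{12}w_{13}\cdots w_{1m}$ as you propose) and fibre part beginning $w^1_{2(m+1)}w^2_{1(m+1)}$ occurs with coefficient exactly one; this is the content of Appendix~\ref{Appendix odd only once}. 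The remaining factors $\prod_{i\geq 2}\prod_s w^s_{1,m+i}$ and $\prod_{s\geq 3} w^s_{1,m+1}$ are already in basis form for their respective blocks $\bar w^u$ in Proposition~\ref{Prop_additive basis}, so the full product has that basis element with nonzero coefficient. To complete your argument you should either reproduce this monomial identification or verify explicitly that your alternative target monomial survives the expansion without cancellation.
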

We obtained an upper bound for $\TC_{\br}[p\colon F(\mathbb{R}^d, n+ m) \to F(\mathbb{R}^d, m)]$ in Proposition \ref{Prop_ubptc}. To complete the theorem, we now need to establish a lower bound, which we provide in Proposition \ref{Prop_odd lower bound} that matches the given upper bound.

\begin{remark}
In Section \ref{sec:problem}, we obtained the result $\TC_{\br}[p\colon F(\mathbb{R}^d, m+n)\to F(\mathbb{R}^d,m)]=6$, where two robots move through two and three points, respectively, in the presence of two obstacles in $\mathbb{R}^3$ (that is, $d=3, ~m = n =2, ~\br =(r_1, r_2)= (2,3))$. 
\end{remark}

\begin{proposition}\label{Prop_odd lower bound}
For an odd integer $d\geq 3$ and $m\geq 2, n\geq 1$, we have the following relation:  $$\TC_{\br}[p \colon F(\mathbb{R}^d, m+n)\to F(\mathbb{R}^d, m)] \geq \displaystyle\sum^{n}_{i=1} {r_i} + m -1.$$
\end{proposition}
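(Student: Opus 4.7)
The strategy, following Proposition~\ref{Prop_lbptc}, is to exhibit $\sum_{i=1}^{n} r_i + m - 1$ cohomology classes in $\ker[\Delta^{\ast}\colon H^{\ast}(E^{\br}_B;\mathbb{Z})\to H^{\ast}(E;\mathbb{Z})]$ with nonzero cup product. Each difference $w^{s}_{ij}-w^{s'}_{ij}$ lies in $\ker\Delta^{\ast}$ by Proposition~\ref{Prop_kernel of diagonal map}, and since $d$ is odd, the degree $d-1$ is even, so all $w^{s}_{ij}$ commute and satisfy $(w^{s}_{ij})^{2}=0$. This lets us profitably \emph{square} selected differences, a trick that will be unavailable in the even-dimensional case.

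Generalizing the product used in Section~\ref{sec:problem}, the candidate is
\[
\Omega := \prod_{i=1}^{n}\bigl(w^{2}_{1,m+i}-w^{1}_{1,m+i}\bigr)^{2}\,\cdot\,\prod_{i=1}^{n}\prod_{s=3}^{r_i}\bigl(w^{s}_{1,m+i}-w^{1}_{1,m+i}\bigr)\,\cdot\,\prod_{j=2}^{m}\bigl(w^{2}_{j,m+1}-w^{1}_{j,m+1}\bigr),
\]
a cup product of exactly $2n+\sum_{i=1}^{n}(r_i-2)+(m-1)=\sum_{i=1}^{n} r_i + m-1$ classes in $\ker\Delta^{\ast}$ (the inner $s$-product is empty when $r_i=2$). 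Since $(w^{2}_{1,m+i}-w^{1}_{1,m+i})^{2}=-2\,w^{1}_{1,m+i}w^{2}_{1,m+i}$, and in each factor $(w^{s}_{1,m+i}-w^{1}_{1,m+i})$ with $s\ge 3$ the alternative $-w^{1}_{1,m+i}$ is killed by the $w^{1}_{1,m+i}$ already produced for the same $i$, we get
\[
\Omega \;=\;(-2)^n\,\prod_{s=1}^{r_n}\prod_{i:\,r_i\ge s}w^{s}_{1,m+i}\,\cdot\,\prod_{j=2}^{m}\bigl(w^{2}_{j,m+1}-w^{1}_{j,m+1}\bigr).
\]
The key observation is that for each $s$, the inner product $\prod_{i:\,r_i\ge s}w^{s}_{1,m+i}$ is already a Leray--Hirsch basis element at time $s$ in the sense of Proposition~\ref{Prop_additive basis}: its $J$-tuple is the increasing list of indices $m+i$ for the robots still moving at time~$s$, which lives in $\{m+n_u+1,\ldots,m+n\}$ for the appropriate $u$, and its $I$-tuple is $(1,1,\ldots,1)$. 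So the times $s\ge 3$ contribute inert independent basis factors, and Leray--Hirsch independence across distinct times reduces the problem to showing that $A_1 A_2\cdot\prod_{j=2}^{m}(w^{2}_{j,m+1}-w^{1}_{j,m+1})\neq 0$, where $A_s:=\prod_{i=1}^{n}w^{s}_{1,m+i}$.

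To conclude, I would expand $\prod_{j=2}^{m}(w^{2}_{j,m+1}-w^{1}_{j,m+1})$ as a signed sum over subsets $S\subseteq\{2,\ldots,m\}$ and reduce each resulting monomial at times $1$ and $2$ using the relation $w^{s}_{ip}w^{s}_{jp}=w^{s}_{ij}(w^{s}_{jp}-w^{s}_{ip})$ from Proposition~\ref{Prop_relations on cohomology classes}(b), pulling the base classes $w_{ij}$ (time-independent by Proposition~\ref{Prop_relations on cohomology classes}(c)) into the $\bar{w}$-factor of the additive basis. The main obstacle is the combinatorial bookkeeping of cancellations among the $2^{m-1}$ summands once they are expressed in the Leray--Hirsch basis. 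The plan is to imitate the explicit calculation carried out for $(m,n,\br)=(2,2,(2,3))$ in Section~\ref{sec:problem}, isolating a distinguished basis element (for instance, the one with base factor $\bar{w}=w_{12}w_{23}\cdots w_{m-1,m}$ and a fixed pattern of time-$1$ and time-$2$ high-index factors) and verifying that its coefficient in the fully reduced $\Omega$ is $\pm 2^n\neq 0$. With this nonvanishing established, Proposition~\ref{Prop_lbptc} yields $\TC_{\br}[p\colon E\to B]\ge \sum_{i=1}^{n} r_i+m-1$, matching the upper bound of Proposition~\ref{Prop_ubptc}; the routine parts of the bookkeeping can be deferred to the Appendix (Section~\ref{sec:appendix}).
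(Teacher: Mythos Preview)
Your approach is essentially the same as the paper's. Both arguments multiply $m-1$ ``obstacle'' differences $w^{1}_{j,m+1}-w^{2}_{j,m+1}$ by a block of ``robot'' differences chosen so that, after using $(w^{s}_{1,m+i})^{2}=0$, the robot block collapses to the single monomial $(-2)^{n}\prod_{i=1}^{n}\prod_{s=1}^{r_i}w^{s}_{1,m+i}$; one then checks that this monomial, times the obstacle product, is nonzero by isolating a surviving Leray--Hirsch basis element. Your packaging of the robot block---square each $(w^{2}_{1,m+i}-w^{1}_{1,m+i})$ and then use linear factors for $s\ge 3$---is in fact cleaner than the paper's block-by-block classes $x^{0},\ldots,x^{\ell-1}$, which require the Appendix~I-1 computation to reach the very same monomial.

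The only point where you stop short is the final nonvanishing verification, which is precisely the content of the paper's main text plus Appendix~I-2. There the target basis element has base factor $\bar{w}=w_{12}w_{23}w_{24}\cdots w_{2m}$ (all second indices equal to $2$), together with $w^{1}_{2,m+1}w^{2}_{1,m+1}$ at times $1,2$; the argument shows this monomial is produced exactly once, namely from the summand with $I=(2,\ldots,m)$, $I'=\emptyset$. Your tentative choice $\bar{w}=w_{12}w_{23}\cdots w_{m-1,m}$ is a legitimate basis element but would require its own uniqueness check; if you follow the paper's choice the bookkeeping goes through as written.
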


\begin{proof}
Following Proposition \ref{Prop_kernel of diagonal map}, we take some cohomology classes having the form $w^s_{ij} - w^{s'}_{ij}$ and use Proposition \ref{Prop_lbptc} to conclude our result.
Consider the classes
\begin{align*}
x = & \prod^m_{i=2}(w^1_{i(m+1)} - w^2_{i(m+1)}),\\
x^0 = & \prod^{m+n_1}_{j=m+1}(w^2_{1j} - w^1_{1j}) \prod^{r_{n_1}}_{s=2} ~\prod^{m+n}_{j=m+1} (w^s_{1j} - w^1_{1j}),\\
x^1 = & \prod^{m+n_2}_{j=m+n_1+1}(w^{r_{n_1}+1}_{1j} - w^1_{1j}) \prod^{r_{n_2}}_{s=r_{n_1}+1} ~\prod^{m+n}_{j=m+ n_1 + 1} (w^s_{1j} - w^1_{1j}),\\
\vdots \\
x^{\ell-1} = & \prod^{m+n}_{j = m+n_{\ell-1}+1}(w^{r_{n_{\ell-1}}+1}_{1j} - w^1_{1j}) \prod^{r_{n_\ell}}_{s=r_{n_{\ell-1}}+1} ~\prod^{m+n}_{j=m+ n_{\ell-1} + 1} (w^s_{1j} - w^1_{1j}).
\end{align*}
Since the degree of any cohomology class is even, the product of two cohomology classes is commutative, i.e., $w^s_{ij} w^{s'}_{i'j'} = w^{s'}_{i'j'} w^s_{ij}$ for all $i,j, i',j',s,s'$. Observe that  
$$x = \sum_{I,I'} (-1)^{|I'|} w^1_{IJ} w^2_{I'J'},$$ where the summation is taken over all the finite tuples $I = (i_1, \ldots , i_p), I' =(i'_1,\ldots, i'_q)$ such that $\{i_1, \ldots, i_p\} \cap \{i'_1, \ldots, i'_q\} = \emptyset$ and  $\{i_1, \ldots, i_p\} \cup \{i'_1, \ldots, i'_q\} = \{2,...,m\}$. Also, $J=(m+1, \ldots, m+1)$ and $J'=(m+1, \ldots, m+1)$ satisfying $0\leq |J|=|I|\leq m-1$ and $0\leq |J'|=|I'| \leq m-1 $.

After a thorough calculation, one obtains the following result. We omit the detailed computation here to maintain the flow for the reader. A brief outline is provided in Appendix \ref{Appendix for odd case}. We have the product of $\sum^{n}_{i=1} {r_i} + m -1$ many cohomology classes as
\begin{align*}
x x^0\cdots x^{\ell-1} = & (-2)^n \sum_{I,I'} (-1)^{|I'|} w^1_{IJ} w^2_{I'J'} \prod^{m+n}_{j=m+1} w^1_{1j}\cdots w^{r_{n_1}}_{1j}\\
&\hspace{2cm} \prod^{m+n}_{j=m+n_1+1} w^{r_{n_1}+1}_{1j} \cdots w^{r_{n_2}}_{1j} \cdots \prod^{m+n}_{j=m+n_{\ell-1}+1} w^{r_{\ell-1}+1}_{1j}\cdots w^{r_n}_{1j}\\
= & (-2)^n \sum_{I,I'} (-1)^{|I'|} w^1_{IJ} w^2_{I'J'} w^1_{1(m+1)} w^2_{1(m+1)} \prod^{m+n}_{j=m+2} w^1_{1j} w^2_{1j} \prod^{m+n}_{j=m+1} w^3_{1j}\cdots w^{r_{n_1}}_{1j}\\
&\hspace{2cm} \prod^{m+n}_{j=m+n_1+1} w^{r_{n_1}+1}_{1j} \cdots w^{r_{n_2}}_{1j} \cdots \prod^{m+n}_{j=m+n_{\ell-1}+1} w^{r_{\ell-1}+1}_{1j}\cdots w^{r_n}_{1j}.
\end{align*}
Using Proposition \ref{Prop_relations on cohomology classes}, we can expand the expression $\displaystyle\sum_{I,I'} (-1)^{|I'|} w^1_{IJ} w^2_{I'J'} w^1_{1(m+1)} w^2_{1(m+1)}$ to basis elements of the form as in Proposition \ref{Prop_additive basis} and show that the cohomology class $w_{12} w_{23}\cdots w_{2m} w^1_{2(m+1)} w^2_{1(m+1)}$ occurs only once in the obtained expression.
Observe that if $I = (2,\ldots,m)$, then the term $$w_{12} w_{23}\cdots w_{2m} w^1_{2(m+1)} w^2_{1(m+1)}$$ appears exactly once in $w^1_{IJ} w^1_{1(m+1)} w^2_{1(m+1)}$. Also, the term $w_{12} w_{23}\cdots w_{2m} w^1_{2(m+1)} w^2_{1(m+1)}$ does not appear in $w^2_{I'J'} w^1_{1(m+1)} w^2_{1(m+1)}$ whenever $I' = (2,\ldots,m)$. 
Moreover, if $2\in I$ and $j\in I'$ then the term $w_{2j}$ does not appear as part of any element in $w^1_{IJ} w^2_{I'J'} w^1_{1(m+1)} w^2_{1(m+1)}$. A similar situation arises whenever $2\in I'$ and $j\in I$. See Appendix \ref{Appendix odd only once} for the detailed calculation.

It follows that the product $x x^0 x^1 \cdots x^{\ell-1}$ contains the following basis element with nonzero coefficient: 
\begin{align*}
& w_{12} w_{23}\cdots w_{2m} w^1_{2(m+1)} w^2_{1(m+1)}\prod^{m+n}_{j=m+2} w^1_{1j} w^2_{1j} \prod^{m+n}_{j=m+1} w^3_{1j}\cdots w^{r_{n_1}}_{1j} \\&\hspace{5cm}\prod^{m+n}_{j=m+n_1+1} w^{r_{n_1}+1}_{1j} \cdots w^{r_{n_2}}_{1j} \cdots \prod^{m+n}_{j=m+n_{\ell-1}+1} w^{r_{\ell-1}+1}_{1j}\cdots w^{r_n}_{1j}.\\
& = w_{IJ}w^1_{I_1J_1}w^2_{I_2J_2}\prod^{m+n}_{j=m+1} w^3_{1j}\cdots w^{r_{n_1}}_{1j} \prod^{m+n}_{j=m+n_1+1} w^{r_{n_1}+1}_{1j} \cdots w^{r_{n_2}}_{1j} \cdots \prod^{m+n}_{j=m+n_{\ell-1}+1} w^{r_{\ell-1}+1}_{1j}\cdots w^{r_n}_{1j}\\
& = w_{IJ} \prod^{\ell-1}_{u=0} \prod^{r_{n_{u+1}}}_{s = r_{n_{u}}+1} w^s_{I_sJ_s},
\end{align*}
where 
\begin{enumerate}[(i)]
\item $I= (1,2,\ldots,2), ~J=(2,3,\ldots,m)$,
\item  $I_1=(2,1, \ldots,1), ~I_s = (1,\ldots,1)$ for $2\leq s\leq r_n$. 
\item $J_s = (m + n_u + 1, \ldots, m + n)$ whenever $r_{n_u}+1\leq s\leq r_{n_{u+1}}$ for $u = 0,1,\ldots, \ell-1$. 
\end{enumerate}
Clearly, this is a basis element, so $x x^0 \cdots x^{\ell-1}\neq 0$. Hence, we get the desired result.
\end{proof}

\sect{Topological complexity in even-dimensional Euclidean space}\label{sec:even}
 
In the following proposition, we show that for the Fadell–Neuwirth fibration in \eqref{Eq_FNB}, the lower bound of $\TC_{\br}[p\colon F(\mathbb{R}^d, m+n)\to F(\mathbb{R}^d, m)]$ is reduced by one relative to the bound established in Proposition \ref{Prop_odd lower bound}, whenever the dimension $d$ of the ambient Euclidean space is even.

\begin{proposition}\label{proposition lower bound even case}
For an even integer $d\geq 2$ and $m\geq 2, n\geq 1$, we have the following relation:
$$\TC_{\br}[p \colon F(\mathbb{R}^d, m+n)\to F(\mathbb{R}^d, m)] \geq \displaystyle\sum^{n}_{i=1} {r_i} + m -2.$$
\end{proposition}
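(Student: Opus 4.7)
The plan is to adapt the cup-length argument of Proposition~\ref{Prop_odd lower bound} to the even-dimensional setting, again invoking Proposition~\ref{Prop_lbptc}. That is, I will exhibit $\sum_{i=1}^{n} r_i + m - 2$ cohomology classes of the form $w^s_{ij} - w^{s'}_{ij}$ in $\ker[\Delta^*]\subseteq H^*(E^{\br}_B)$ and show that their cup product is nonzero. The essential new feature is that when $d$ is even the generators $w^s_{ij}$ have odd degree $d-1$, so graded commutativity gives $w^s_{ij}\, w^{s'}_{i'j'} = -w^{s'}_{i'j'}\, w^s_{ij}$ and in particular $(w^s_{ij} - w^{s'}_{ij})^2 = 0$, as compared with $(w^s_{ij} - w^{s'}_{ij})^2 = -2\, w^s_{ij}w^{s'}_{ij}\neq 0$ in the odd case. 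This is exactly the structural reason why the lower bound must drop by one.

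Concretely, in the construction of Proposition~\ref{Prop_odd lower bound} each $x^u$ has its outer single product duplicating the smallest-$s$ row of its inner double product, contributing $n_{u+1} - n_u$ duplicated pairs, for a total of $n$ duplicated pairs over $u = 0, 1, \dots, \ell - 1$. In the odd case these duplicates squared to nonzero terms (yielding the overall factor $(-2)^n$ in the formula for $x\cdot x^0\cdots x^{\ell-1}$), but in the even case each pair of equal odd-degree factors kills the product after anti-commutation. I plan to remedy this by deleting the outer single product from every $x^u$ (removing the $n$ duplicated factors) and adjoining the $n - 1$ replacement classes $(w^1_{2,j} - w^2_{2,j}) \in \ker[\Delta^*]$ for $j = m+2, \dots, m+n$, which are nonzero by Proposition~\ref{Prop_relations on cohomology classes}(d) and lie in $\ker[\Delta^*]$ by Proposition~\ref{Prop_kernel of diagonal map}. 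Since the retained inner double products use first index $i = 1$ while $x$ is anchored at $j = m+1$, none of the new factors coincides with any other factor in the family, and the modified family consists of exactly $\sum_{i=1}^n r_i + m - 2$ pairwise distinct classes.

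The main obstacle is verifying that the cup product of this modified family is nonzero in $H^*(E^{\br}_B)$. The anti-commutativity of odd-degree classes produces additional sign cancellations beyond those encountered in the odd case, so the expansion must be handled carefully, using the relations in Proposition~\ref{Prop_relations on cohomology classes}---in particular $w^s_{ip}w^s_{jp}=w^s_{ij}(w^s_{jp}-w^s_{ip})$---to rewrite each term in the additive basis supplied by Proposition~\ref{Prop_additive basis}. I expect to isolate a specific basis element, a variant of the surviving element from the odd proof modified to reflect the removed outer products and the newly adjoined factors $(w^1_{2,j} - w^2_{2,j})$, and to show it appears with nonzero integral coefficient. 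The combinatorial bookkeeping parallels but is strictly more delicate than that of Appendix~\ref{Appendix for odd case}, with the chief technical subtlety being the sign tracking for odd-degree generators.
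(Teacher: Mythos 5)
Your high-level plan correctly identifies both the obstruction (odd degree $d-1$ forces $(w^s_{ij}-w^{s'}_{ij})^2 = 0$, killing the $(-2)^n$ factor of the odd case) and the remedy (delete the $n$ outer single-product factors and adjoin $n-1$ new classes in $\ker\Delta^*$), and your factor count is right. However, what you have written is a plan, not a proof: the entire content of Proposition~\ref{proposition lower bound even case} is the nonvanishing of the cup product, and you explicitly defer this as ``the main obstacle,'' saying only that you ``expect to isolate a specific basis element.'' That expectation is precisely what has to be proved. In the paper this occupies the bulk of the proof together with Appendix~\ref{app_proposition lower bound even case}: the authors regroup the factors into $\tilde{y}$, $\tilde{y}'$, $\tilde{y}^0, y^1, \dots, y^{\ell-1}$, expand $\tilde{y}' = \prod_{j}(w^2_{1j}-w^1_{1j})(w^2_{(j-1)j}-w^1_{(j-1)j})$ via the Arnold relations, and track down a single surviving basis monomial $w_{12}w_{23}\cdots w_{2m}\,w^1_{m(m+1)}\,w^1_{1(m+2)}\cdots w^1_{1(m+n)}\prod_{j\ge m+2}w^2_{(j-1)j}\cdots$ together with a uniqueness-of-occurrence argument. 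Without an analogue of this computation your argument is incomplete.

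There is also a concrete divergence from the paper that you would have to account for. The paper's extra $n-1$ factors are $y' = \prod_{j=m+2}^{m+n}(w^2_{(j-1)j}-w^1_{(j-1)j})$, whose indices $(j-1,j)$ are both robot indices and form a chain, so the Arnold reductions of $\tilde{y}'$ produce new robot--robot generators $w^s_{1(j-1)}$. Your replacement $\prod_{j=m+2}^{m+n}(w^1_{2j}-w^2_{2j})$ uses the obstacle index $2$ throughout; the Arnold relation $w^s_{1j}w^s_{2j} = w_{12}(w^s_{2j}-w^s_{1j})$ then injects the \emph{fixed} class $w_{12}$ repeatedly, and since $w_{12}^2 = 0$ these contributions interfere with one another in a pattern quite different from the paper's. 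It is plausible that the cross-$s$ terms $w^2_{1j}w^1_{2j}$ and $w^1_{1j}w^2_{2j}$ still carry a nonvanishing basis monomial (a quick check when $m=n=2$, $\bar{\mathbf r}=(2,2)$ does succeed), but a general-$m,n,\ell$ verification must be supplied, and the bookkeeping will not ``parallel'' Appendix~\ref{Appendix for odd case} as closely as you suggest, precisely because of the repeated $w_{12}$. Finally, a small imprecision: Proposition~\ref{Prop_relations on cohomology classes}(d) asserts certain \emph{identifications} $w^s_{ij}=w^{s'}_{ij}$, not nonvanishing of differences; the nonvanishing of $w^1_{2j}-w^2_{2j}$ instead follows from the additive basis of Proposition~\ref{Prop_additive basis}, which exhibits $w^1_{2j}$ and $w^2_{2j}$ as independent generators.
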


\begin{proof}
To apply Proposition \ref{Prop_lbptc}, we consider the following cohomology classes 
\begin{align*}
y = & \prod^m_{i=2}(w^1_{i(m+1)} - w^2_{i(m+1)}),\\
y' = & \prod^{m+n}_{j=m+2} (w^2_{(j-1)j} - w^1_{(j-1)j}),\\
y^0 = & \prod^{r_{n_1}}_{s=2} ~\prod^{m+n}_{j=m+1} (w^s_{1j} - w^1_{1j}),\\
y^1 = & \prod^{r_{n_2}}_{s=r_{n_1}+1} ~\prod^{m+n}_{j=m+ n_1 + 1} (w^s_{1j} - w^1_{1j}),\\
\vdots \\
y^{\ell-1} = & \prod^{r_{n_\ell}}_{s=r_{n_{\ell-1}}+1} ~\prod^{m+n}_{j=m+ n_{\ell-1} + 1} (w^s_{1j} - w^1_{1j}).
\end{align*}
Since the cohomology classes are in odd degree, 
\begin{equation}\label{Eq_anticommutativity}
w^s_{ij} w^{s'}_{i'j'} = - w^{s'}_{i'j'} w^s_{ij} ~~\text{ for all } i,j, i',j',s,s'.
\end{equation}
 In the rest of this section, we use the $\pm$ sign to denote either $+$ or $-$ without any ambiguity.
For $u=1,\ldots, \ell-1$, we can simplify $y^u$ by expanding the product on $s$ from $r_{n_u}+1$ to $r_{n_u+1}$ and yield
\begin{equation*}
  y^u = \prod^{m+n}_{j=m+n_u +1} \Big(w^{r_{n_u} +1}_{1j} \cdots w^{r_{n_{u+1}}}_{1j} \pm w^1_{1j} \sum^{r_{n_{u+1}}}_{a=r_{n_u}+1} w^{r_{n_u} +1}_{1j}\cdots \hat{w}^a_{1j} \cdots w^{r_{n_{u+1}}}_{1j}\Big),  
\end{equation*} 
where $\hat{w}^a_{1j}$ denotes that the term ${w}^a_{1j}$ is absent in the product and $\pm$ indicates that the sign of an element can be either positive or negative inside the summation.
A similar simplification provides $$y^0 = \prod^{m+n}_{j=m+1} (w^2_{1j} - w^1_{1j}) \prod^{m+n}_{j=m+1}\Big(w^3_{1j} \cdots w^{r_{n_1}}_{1j} \pm w^1_{1j} \sum^{r_{n_1}}_{a=3} w^3_{1j} \cdots \hat{w}^a_{1j} \cdots w^{r_{n_1}}_{1j}\Big).$$
To simplify the product of the cohomology classes, we rearrange some of the cohomology classes. First, we remove $\displaystyle\prod^{m+n}_{j=m+1} (w^2_{1j} - w^1_{1j})$ from $y^0$ to obtain $\tilde{y}^0$. Further, we divide the product $\displaystyle\prod^{m+n}_{j=m+1} (w^2_{1j} - w^1_{1j})$ into two terms $(w^2_{1(m+1)} - w^1_{1(m+1)})$ and $\displaystyle\prod^{m+n}_{j=m+2} (w^2_{1j} - w^1_{1j})$. 
Then we multiply $y$ with the first term $(w^2_{1(m+1)} - w^1_{1(m+1)})$ to obtain $\tilde{y}$ and multiply the remaining term $\displaystyle\prod^{m+n}_{j=m+2} (w^2_{1j} - w^1_{1j})$ with $y'$ to produce $\tilde{y}'$. 
Therefore, we obtain
\begin{align}\label{elb}
\tilde{y}^0 = & \prod^{m+n}_{j=m+1}\Big(w^3_{1j} \cdots w^{r_{n_1}}_{1j} \pm w^1_{1j} \sum^{r_{n_1}}_{a=3} w^3_{1j} \cdots \hat{w}^a_{1j} \cdots w^{r_{n_1}}_{1j}\Big),\nonumber \\
 \tilde{y}' = & \prod^{m+n}_{j=m+2} (w^2_{1j} - w^1_{1j}) \prod^{m+n}_{j=m+2} (w^2_{(j-1)j} - w^1_{(j-1)j}), \nonumber \\
= & \prod^{m+n}_{j=m+2} \Big(w^2_{1(j-1)} w^2_{(j-1)j} - w^2_{1(j-1)} w^2_{1j} - w^1_{1j} w^2_{(j-1)j} + w^1_{(j-1)j} w^2_{1j} + \nonumber \\
& \hspace{3cm} w^1_{1(j-1)} w^1_{(j-1)j} - w^1_{1(j-1)} w^1_{1j}\Big) (\text{using \eqref{Eq_anticommutativity} and Proposition } \ref{Prop_relations on cohomology classes}(b)),\nonumber \\
\tilde{y} =&  ~(w^2_{1(m+1)} - w^1_{1(m+1)}) \prod^m_{i=2}(w^1_{i(m+1)} - w^2_{i(m+1)}) =  \pm \sum_{I,I'} w^1_{IJ} w^2_{I'J'},
\end{align}
where the summation in \eqref{elb} is taken over all the finite tuples $I = (i_1, \ldots , i_p), I' =(i'_1,\ldots, i'_q)$ such that $\{i_1, \ldots, i_p\} \cap \{i'_1, \ldots, i'_q\} = \emptyset$ and  $\{i_1, \ldots, i_p\} \cup \{i'_1, \ldots, i'_q\} = \{1, ..., m\}$. Also, $J=(m+1, \ldots, m+1)$ and $J'=(m+1, \ldots, m+1)$ satisfying $0\leq |J|=|I|\leq m$ and $0\leq |J'|=|I'| \leq m $.
Now we consider the product
\begin{align*}
\tilde{y}^0 y^1 \cdots y^{\ell-1} = & \prod^{m+n_1}_{j=m+1}\Big(w^3_{1j} \cdots w^{r_{n_1}}_{1j} \pm w^1_{1j} \sum^{r_{n_1}}_{a=3} w^3_{1j} \cdots \hat{w}^a_{1j} \cdots w^{r_{n_1}}_{1j}\Big)\\
& \prod^{m+n_2}_{j=m+n_1 +1} \Big(w^3_{1j} \cdots w^{r_{n_2}}_{1j} \pm w^1_{1j} \sum^{r_{n_2}}_{a=3} w^3_{1j} \cdots \hat{w}^a_{1j} \cdots w^{r_{n_2}}_{1j}\Big) \cdots \\
& \prod^{m+n}_{j=m+n_{\ell-1} +1}\Big(w^3_{1j} \cdots w^{r_{n}}_{1j} \pm w^1_{1j} \sum^{r_{n}}_{a=3} w^3_{1j} \cdots \hat{w}^a_{1j} \cdots w^{r_{n}}_{1j}\Big).
\end{align*}
With a rigorous calculation performed in Appendix \ref{app_proposition lower bound even case}, we conclude that the product $\tilde{y}' (\tilde{y}^0 y^1 \cdots y^{\ell-1})$ contains the following product term 
\begin{equation}\label{eut}
\begin{split}
&\Big(w^3_{1(m+1)} \cdots w^{r_{n_1}}_{1(m+1)} \pm w^1_{1(m+1)} \sum^{r_{n_1}}_{a=3} w^3_{1(m+1)} \cdots \hat{w}^a_{1(m+1)} \cdots w^{r_{n_1}}_{1(m+1)}\Big)\\
&\hspace{.5cm} \Big{(} \prod^{m+n_1}_{j=m+2} w^1_{1j} w^2_{(j-1)j} w^3_{1j} \cdots w^{r_{n_1}}_{1j} \prod^{m+n_2}_{j=m+ n_1 +1} w^1_{1j} w^2_{(j-1)j} w^3_{1j} \cdots w^{r_{n_2}}_{1j} \cdots \\
& \hspace{7cm} \prod^{m+n}_{j=m+n_{\ell-1} +1} w^1_{1j} w^2_{(j-1)j} w^3_{1j} \cdots w^{r_{n}}_{1j}\Big{)}
\end{split}
\end{equation}
only once. We skip the calculation here for the sake of fluency in argument. 
For $I, I' , J, J'$ in \eqref{elb}, multiplying the product $w^1_{IJ} w^2_{I'J'}$ with the elements in $\tilde{y}' \tilde{y}^0 y^1 \cdots y^{\ell-1}$, except the element in \eqref{eut}, does not yield any element of the form $w^1_{IJ} w^2_{I'J'} \Gamma$, where $\Gamma$ is the element in \eqref{eut}.
Therefore, $\tilde{y}\tilde{y}' \tilde{y}^0 y^1 \cdots y^{\ell-1}$ contains the following products precisely once:
\begin{equation*} 
\begin{split}
&\sum_{I,I'} w^1_{IJ} w^2_{I'J'} \Big(w^3_{1(m+1)} \cdots w^{r_{n_1}}_{1(m+1)} \pm w^1_{1(m+1)} \sum^{r_{n_1}}_{a=3} w^3_{1(m+1)} \cdots \hat{w}^a_{1(m+1)} \cdots w^{r_{n_1}}_{1(m+1)}\Big)
\\
& \hspace{1cm} \Big( \prod^{m+n_1}_{j=m+2} w^1_{1j} w^2_{(j-1)j} w^3_{1j} \cdots w^{r_{n_1}}_{1j} \prod^{m+n_2}_{j=m+ n_1 +1} w^1_{1j} w^2_{(j-1)j} w^3_{1j} \cdots w^{r_{n_2}}_{1j} \cdots
\\ & \hspace{3cm} \prod^{m+n}_{j=m+n_{\ell-1} +1} w^1_{1j} w^2_{(j-1)j} w^3_{1j} \cdots w^{r_{n}}_{1j}. \Big)
\end{split}
\end{equation*}
For $I = (1,\ldots, m)$ the product $\tilde{y}\tilde{y}' \tilde{y}^0 y^1 \cdots y^{\ell-1}$ contains the following term:
\begin{align*}
& w_{12} w_{23} \cdots w_{2m} w^1_{m(m+1)} w^3_{1(m+1)} \cdots w^{r_{n_1}}_{1(m+1)} \prod^{m+n_1}_{j=m+2} w^1_{1j} w^2_{(j-1)j} w^3_{1j} \cdots w^{r_{n_1}}_{1j}\\
&\hspace{1cm} \prod^{m+n_2}_{j=m+ n_1 +1} w^1_{1j} w^2_{(j-1)j} w^3_{1j} \cdots w^{r_{n_2}}_{1j} \cdots \prod^{m+n}_{j=m+n_{\ell-1} +1} w^1_{1j} w^2_{(j-1)j} w^3_{1j} \cdots w^{r_{n}}_{1j}\\
= &\pm w_{12} w_{23} \cdots w_{2m} w^1_{m(m+1)} w^1_{1(m+2)}\cdots w^1_{1(m+n)} \prod^{m+n}_{j=m+2} w^2_{(j-1)j}  \prod^{m+n}_{j=m+1} w^3_{1j}\cdots w^{r_{n_1}}_{1j}\\
&\hspace{2cm} \prod^{m+n}_{j=m+ n_1 +1} w^{r_{n_1}+1}_{1j} \cdots w^{r_{n_2}}_{1j} \cdots \prod^{m+n}_{j=m+n_{\ell-1}+1} w^{r_{\ell-1}+1}_{1j}\cdots w^{r_n}_{1j}\\
= & \pm w_{IJ} \prod^{\ell-1}_{u=0} \prod^{r_{n_{u+1}}}_{s = r_{n_{u}}+1} w^s_{I_sJ_s};
\end{align*}
where 
\begin{enumerate}[(i)]
\item $I= (1,2,\ldots,2), ~J=(2,3,\ldots,m)$,
\item  $I_1=(m,1, \ldots,1), I_2 = (m+1, \ldots, m+n-1), ~I_s = (1,\ldots,1)$ for $3\leq s\leq r_n$. 
\item $J_2 =(m+2, \ldots m+n),~J_s = (m + n_u + 1, \ldots, m + n)$ whenever $r_{n_u}+1\leq s(\neq 2)\leq r_{n_{u+1}}$ for $u = 0,1,\ldots, \ell-1$. 
\end{enumerate}
Using the similar argument as in the proof of Proposition \ref{Prop_odd lower bound}, this term is the unique nonzero basis element in $\tilde{y}\tilde{y}' \tilde{y}^0 y^1 \cdots y^{\ell-1}$. Thus $\tilde{y}\tilde{y}' \tilde{y}^0 y^1 \cdots y^{\ell-1} \neq 0$ which implies $y y' y^0 y^1\cdots y^{\ell-1} \neq 0$. Since this product contains $\sum^{n}_{i=1} {r_i} + m -2$ many cohomology classes, the result follows.
\end{proof}

\section{An algorithm}\label{sec:algorithm}
In this section, we present an algorithm for collision-free motion of $n$ robots, $z_1, z_2, \dots, z_n$, where robot $z_i$ has to sequentially visit $r_i$ many prescribed states in presence of $m\geq 2$ obstacles with unknown a priori positions.
We first present a general algorithm that works for both odd and even-dimensional cases consisting $R + m$ local algorithms, where $R=\sum^{n}_{i=1} {r_i}$. Then for the even-dimensional case we reduce the number of local algorithms to $R + m -1$. It proves that for even $d$, the upper bound of the $\bar{\textbf{r}}$-th sequential parametrized topological complexity of the Fadell-Neuwirth fibration $p\colon E\to B$ as in \eqref{Eq_FNB} is given by $$\TC_{\br}[p\colon E \to B] \leq R + m -2.$$
The idea of the algorithm is similar to that in \cite{FarP23}, where the authors provide an algorithm only for the even-dimensional case, while we present algorithms for cases of both dimensions. 
Here we note that the algorithm \cite[Section 2]{FarP23} works for the odd-dimensional case as well, i.e., if a single robot moves in $\rr^d$ for any $d\geq 2$, avoiding collisions with $m\geq 2$ obstacles, then we may apply that method. Moreover, for the odd-dimensional case we do not need to vary the line $L_b$ depending on the position of the obstacles. 
We apply \cite[Section 2]{FarP23} later in this section.

We fix an oriented line $L$ passing through the origin in $\rr^d$. Let $e$ be the unit vector in the positive direction of the line $L$, and we fix the unit vector $e^{\perp}$ perpendicular to $e$. We denote by $q: \rr^d \to L$ the orthogonal projection on the line $L$, defined by $q(x)=\langle  x, e \rangle  \cdot  e$.

Consider the space $E^{\bar{\textbf{r}}}_B$. For any configuration $C\in E^{\bar{\textbf{r}}}_B$
\begin{equation*}
C=(o_1, \ldots, o_m, z^1_{1}, \ldots , z^{r_{1}}_{1}, \ldots, z^1_{n}, \ldots , z^{r_{n}}_{n} ),
\end{equation*} 
we shall denote $q(C)$ the set of the projection points $$q(C) = \{q(o_j), ~q(z_i^{k_i}); ~1\leq j \leq m,~ 1\leq i \leq n, ~ 1\leq k_i \leq r_i\}$$ in their respective order.
Since some of the projection points may coincide, the cardinality of this set $ |q(C)|$ may vary between $1$ and $R + m$ depending on the configuration $C$, i.e. $$1\leq |q(C)|\leq R + m.$$

\subsection{Partition of \texorpdfstring{$E^{\br}_B$}:}

For any $c \in \{1,\dots,R + m\}$, we define the set $$W_c=\big \{ C \in E^{\bar{\textbf{r}}}_B \colon |q(C)| = c\}.$$
We aim to present an algorithm over each $W_c$. On the other hand, we define $$E_{\mu,\nu}= \big\{C\in E^{\bar{\textbf{r}}}_B : |q(C)| = \mu+\nu ~\text{and}~ |q(O)| =\mu \big\},$$ where $O$ is the configuration of the obstacles $(o_1, \dots, o_m)$ associated to the configuration $C$ and $1\leq \mu \leq m,~ 0\leq \nu \leq R$. Notice that $$W_c=\bigsqcup_{\mu+\nu=c}E_{\mu,\nu}.$$
It is easy to check that for any $\mu$ and $\nu$, $$\overline{E_{\mu,\nu}}\subset \bigcup_{\mu'\leq \mu, \nu'\leq \nu}E_{\mu', \nu'}.$$ So, if $\mu+\nu=c$ then $\overline{E_{\mu, \nu}}\cap W_c=E_{\mu, \nu}$. Therefore, each $E_{\mu, \nu}$ satisfying $\mu+\nu=c$ is closed as well as open in $W_c$. Hence, a continuous algorithm on each $E_{\mu,\nu}$ collectively defines a continuous algorithm on $W_c$. In the following, we construct a continuous algorithm over each $E_{\mu,\nu}$, where $ 1\leq \mu \leq m,~ 0\leq \nu \leq R$. 

\subsection{Decomposition of the set \texorpdfstring{$E_{\mu, \nu}$}:}
Let $P$ be a finite collection of $R+m$ symbols as follows
\begin{equation}\label{eq symbols}
P=\big\{o_1, \ldots, o_m, z^1_{1}, \ldots , z^{r_{1}}_{1}, \ldots, z^1_{n}, \ldots , z^{r_{n}}_{n} \big\}.
\end{equation} A binary relation $\leq$ on $P$ is called \emph{quasi-order} if it is reflexive and transitive. A quasi-order $\leq$ is called \emph{linear} if for any $a, b \in P$ either $a\leq b$ or $b\leq a$. Note that a quasi-order or a linear quasi-order allows for distinct elements $a, b \in P$ to satisfy $a\leq b$ and $b\leq a$. 
For any linear quasi-order on $P$ we can define an equivalence relation, $a\sim b$ if and only if $a\leq b$ and $b\leq a$. If $a\sim b$ then we say that $a$ and $b$ are equivalent with respect to the quasi-order $\leq$. We denote by $\Sigma_{\mu, \nu}$, the set of all linear quasi-orders over $P$ having in total $\mu+\nu$ many equivalence classes such that the sub-collection $\{o_1, \ldots, o_m\}$ has $\mu$ equivalence classes.

Let $C\in E^{\bar{\textbf{r}}}_B$ be a  configuration. For any two elements $a$ and $b$ of $C$, we say $a\leq b$ if and only if $q(a)\leq q(b)$. Then $\leq$ associates a linear quasi-order over $P$. In this case, we say that $C$ generates the quasi-order $\leq$ over $P$. For any $\sigma \in \Sigma_{\mu, \nu}$, we define $$E_{\mu, \nu}^{\sigma} \coloneq \big\{ C\in E_{\mu, \nu} ~\colon~ C \text{ generates the quasi-order } \sigma\big\}.$$

\noindent Clearly, $$E_{\mu, \nu}=\bigsqcup_{\sigma \in \Sigma_{\mu, \nu}} E_{\mu, \nu}^{\sigma}.$$
Also note that each $E_{\mu, \nu}^{\sigma}$ is closed and open in $E_{\mu, \nu}$. Therefore, a continuous algorithm on each $E_{\mu,\nu}^{\sigma}$ collectively defines a continuous algorithm on $E_{\mu, \nu}$. 

In the following, we define an algorithm over each $E_{\mu,R}^{\tau}$, where $\tau \in \Sigma_{\mu,R}$ and $1\leq \mu \leq m.$ Then we induce an algorithm over each $E_{\mu,\nu}^{\sigma}$ from the algorithm over $E_{\mu,R}^{\tau}$ for some $\tau \in \Sigma_{\mu,R}$.

\subsection{Algorithm over \texorpdfstring{$E_{\mu, R}^{\tau}$}:}\label{Algorithm over E u,R,tau} 
 An algorithm over $E_{\mu, R}^{\tau}$ is a section of the fibration $\Pi_{\bar{\textbf{r}}}$ in \eqref{Eq_pi r bar}. It is determined by collection of $n$ paths $$\gamma_1^C, \gamma_2^C, \ldots, \gamma_n^C : I \to \rr^d,$$
continuously depending on the configuration $C\in E_{\mu, R}^{\tau}$ and satisfying the following properties:
\begin{enumerate}[(i)]
    \item $\gamma_i^C(t)\neq \gamma_j^C(t)$ for $i\neq j$,
    \item $\gamma_i^C(t)\neq o_j$ for $i=1, \ldots, n$ and $j=1, \ldots, m$,
    \item $\big(\gamma_i^C(t_1), \gamma_i^C(t_2), \ldots, \gamma_i^C(t_{r_i})\big)= \big(z_i^1, z_i^2, \ldots, z_i^{r_i}\big)$  for $i=1, \ldots, n$.   
\end{enumerate}

\noindent  The time schedule for our problem is given by
\[
0 = t_1 < t_2 < \cdots < t_{r_n} = 1.
\]
We construct the paths $\gamma_1^C, \ldots, \gamma_n^C$ for robots 
$z_1, \ldots, z_n$ respectively, as follows.

\medskip
\noindent
\subsection*{The paths over the first interval \texorpdfstring{$[t_1, t_2]$}:}

We divide $[t_1, t_2]$ into $n$ equal subintervals:
$$t_1 = t_{1,0} < t_{1,1} < \cdots < t_{1,n} = t_2.$$
\begin{itemize}
    \item On $[t_{1,0}, t_{1,1}]$, we apply the method of \cite[Section~2]{FarP23} to move the robot $z_1$ from $z_1^1$ to $z_1^2$, treating $o_1, \ldots, o_m$ and $z_2^1, \ldots, z_n^1$ as obstacles. During this subinterval, the paths $\gamma_2^C, \ldots, \gamma_n^C$ remain constant.
    \item On $[t_{1,1}, t_{1,2}]$, we move robot $z_2$ from $z_2^1$ to $z_2^2$, treating $o_1, \ldots, o_m$ and $z_1^2, z_3^1, \ldots, z_n^1$ as obstacles. The paths $\gamma_1^C, \gamma_3^C, \ldots, \gamma_n^C$ remain constant.
    \item This process continues until $[t_{1,n-1}, t_{1,n}]$, where $z_n$ moves from $z_n^1$ to $z_n^2$ while treating $o_1, \ldots, o_m, z_1^2, \ldots, z_{n-1}^2$ as obstacles.
\end{itemize}
At the end of the interval $[t_1, t_2]$ the robots $z_1, \ldots z_n$ reach the states $z_1^2, \ldots, z_{n}^2$ respectively.

\medskip
\noindent
\subsection*{The paths over general interval \texorpdfstring{$[t_{k-1}, t_k]$ for $2 \leq k \leq r_n$}:}
Since the number of states $r_i$ visited by the robot $z_i$ can vary with $i \in \{1,2,\ldots,n\}$ ( $r_1 \leq r_2 \leq \cdots \leq r_n$), the above process may not directly apply to a general interval $[t_{k-1}, t_k]$,~ $2 \leq k \leq r_n$. Notice that there exists $i$ such that
$r_{i-1} < k \leq r_i, \text{ where } r_0 = 0.$
During $[0, t_{k-1}]$, robots $z_1, \ldots, z_{i-1}$ have already reached their final positions, while $z_i, \ldots, z_n$ still need to move.
We divide $[t_{k-1}, t_k]$ into $n - i + 1$ equal parts:
$$
t_{k-1} = t_{k-1,0} < t_{k-1,1} < \cdots < t_{k-1,\,n-i+1} = t_k.
$$
On $[t_{k-1,0}, t_{k-1,1}]$, we apply \cite[Section~2]{FarP23} to move the robot $z_i$ from $z_i^{k-1}$ to $z_i^k$, treating
\[
o_1, \ldots, o_m, \quad z_1^{r_1}, \ldots, z_{i-1}^{r_{i-1}}, \quad z_{i+1}^{k-1}, \ldots, z_n^{k-1}
\]
as obstacles. The remaining sub-intervals are handled similarly for robots $z_{i+1}, \ldots, z_n$.  At the end of time $t_{r_n}$ each robot reach their final state. 
This algorithm is continuous over $E_{\mu, R}^{\tau}$ and the paths $\gamma_1^C, \gamma_2^C, \ldots, \gamma_n^C$ satisfy conditions (i),(ii), and (iii).

\subsection{Algorithm over \texorpdfstring{$E_{\mu, \nu}^{\sigma}$}:} 

Let $C \in E_{\mu, \nu}^{\sigma}$ be any configuration to which we associate a positive number $\delta_C$ defined as follows. If $\mu + \nu \geq 2$, then  
$$2\delta_C:= \min \Big\{|q(z_i^{k_i})-q(z_\ell^{k_\ell})|, ~|q(z_i^{k_i})-q(o_j)|~\colon~q(z_i^{k_i})\neq q(z_\ell^{k_\ell}),~ q(z_i^{k_i})\neq q(o_j)\Big\},$$
where $i, \ell \in \{1, \ldots, n\}, ~ j\in \{1, \ldots, m\}, ~ k_i \in \{1, \ldots, r_i\}, ~ k_\ell \in \{1, \ldots, r_\ell\}$. If instead $\mu + \nu = 1$, we set $\delta_C := 1$. Notice that $\delta_C$ is continuously dependent on the configuration $C \in E_{\mu, \nu}^{\sigma}$.  Define a homotopy 
$$H:  E_{\mu, \nu}^{\sigma} \times I \to E^{\bar{r}}_B, \quad H(C, s)= \Big(o_1, \ldots, o_m, \alpha^1_{1}(s), \ldots , \alpha^{r_{1}}_{1}(s), \ldots, \alpha^1_{n}(s), \ldots , \alpha^{r_{n}}_{n}(s) \Big),$$
where, $$\alpha_i^{k_i}(s)= z_i^{k_i}+ \frac{(\Sigma_{\ell=1}^{i-1}r_\ell + k_i)\cdot s \cdot \delta_C}{R}\cdot e.$$   
One can see that $H(C, 0)=C$ and if $s>0$ then $H(C, s)\in E_{\mu, R}^{\tau}$ for some $\tau \in \Sigma_{\mu, R}$. It is easy to see that if $C'$ is any other configuration of $E_{\mu, \nu}^{\sigma}$ and $s'>0$ then $H(C', s')\in E_{\mu, R}^{\tau}.$ We define $$A_{\mu, \nu}^{\sigma}=\{ H(C, 1) ~|~ C\in E_{\mu, \nu}^{\sigma}\},$$

\noindent Then $A_{\mu, \nu}^{\sigma} \subset E_{u, R}^{\tau}$. We may apply the Algorithm \ref{Algorithm over E u,R,tau} over $A_{\mu, \nu}^{\sigma}$ and use $H$ to get a deformation between $E_{\mu, \nu}^{\sigma}$ and $A_{\mu, \nu}^{\sigma}$. As a result, we obtain an algorithm over $E_{\mu, \nu}^{\sigma}$.

This provides an algorithm over $E^{\bar{r}}_B$ that is continuous on each $W_c, ~ 1\leq c \leq R+m$.

\begin{remark}
    This shows that $\TC_{\bar{\textbf{r}}}[p\colon E \to B] \leq \displaystyle\sum^{n}_{i=1} {r_i} + m -1$ where $p$ is the fibration \eqref{Eq_FNB} and the dimension $d\geq 2$ can be even or odd. This result we already proved in the Proposition \ref{Prop_ubptc} using homotopy dimension and connectivity.
\end{remark}

\subsection*{Algorithm for the even-dimensional Euclidean space}
In this subsection, we modify the above algorithm, which will work when the dimension $d$ of the Euclidean space $\rr^d$ is even. We will see that in this case the number $\TC_{\bar{\textbf{r}}}[p\colon E \to B]$ is reduced by one.

For a configuration $C\in E^{\bar{r}}_B$ 
consider the unit vector $e_C = \frac{o_2 - o_1}{||o_2 - o_1||}\in S^{d-1}$, where $o_1$ and $o_2$ are the first two obstacles associated to $C$. Let $L_C$ be the oriented line passing through the origin in the direction of $e_C$. Since the dimension $d\geq 2$ is even so the sphere $S^{d-1}$ admits a continuous non-vanishing tangent vector field. Such a tangent vector field assigns a perpendicular unit vector $(e_C)^\perp$ corresponds to each $e_C$, continuously depending on $C\in E^{\bar{r}}_B$. Here we consider an orthogonal projection map $$q_C : \mathbb{R}^d\to L_C \text{ defined as } q_C(x)=\langle  x, e_C \rangle  \cdot  e_C.$$
and consider the set
$$q_C(C) = \{q_C(o_j), ~q_C(z_i^{k_i}): ~1\leq j \leq m,~ 1\leq i \leq n, ~ 1\leq k_i \leq r_i \}.$$
Notice that $q_C(o_1)\neq q_C(o_2)$. Therefore in this case $$2\leq |q_C(C)|\leq R + m.$$
As in the previous case here we define the set $$W_c=\big\{C\in E^{\bar{r}}_B : |q_C(C)| = c \big\},$$
where $2\leq c \leq R + m$ and we make a continuous algorithm in similar way as previously discussed in this section. Since here the number of local rule is $R+m-1$, the topological complexity $\TC_{\bar{\textbf{r}}}[p\colon E \to B] \leq \displaystyle\sum^{n}_{i=1} {r_i} + m -2$. Using the Proposition \ref{proposition lower bound even case}, we get the following theorem.

\begin{theorem}\label{Th_even case}
The $\bar{\textbf{r}}$-sequential parametrized topological complexity of the Fadell-Neuwirth fibration \eqref{Eq_FNB} is given by $$\TC_{\br}[p\colon F(\mathbb{R}^d, n+ m) \to F(\mathbb{R}^d, m)] =\sum^{n}_{i=1} {r_i} + m -2,$$
where $d\geq 2$ is even, $n\geq 1$, and $m\geq 2$. 
\end{theorem}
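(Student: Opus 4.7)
The plan is to sandwich $\TC_{\br}[p\colon E\to B]$ between matching lower and upper bounds, both of which are essentially available in the preceding discussion. Since the statement concerns even-dimensional ambient space, the key new ingredient (compared with the odd case in Theorem \ref{Th_odd case}) is the existence of a continuous non-vanishing tangent vector field on $S^{d-1}$, which allows us to save one local rule in the motion-planning algorithm.

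For the lower bound, I would invoke Proposition \ref{proposition lower bound even case} directly, which provides the inequality
\[
\TC_{\br}[p\colon F(\mathbb{R}^d, m+n)\to F(\mathbb{R}^d, m)] \;\geq\; \sum_{i=1}^{n} r_i + m - 2
\]
through the explicit nonvanishing cup product of the classes $y, y', y^0, y^1, \ldots, y^{\ell-1}$ in $H^{\ast}(E^{\br}_B)$ whose construction exploits the odd-degree (hence anticommuting) generators $w^s_{ij}$ arising when $d$ is even.

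For the matching upper bound, I would appeal to the algorithm constructed earlier in Section \ref{sec:algorithm}. The general construction partitions $E^{\br}_B$ using the sets $W_c$ determined by the cardinality $|q_C(C)|$ of the projected configuration, and further into the pieces $E^{\sigma}_{\mu,\nu}$ indexed by linear quasi-orders. In the even-dimensional subsection of Section \ref{sec:algorithm}, the projection line is upgraded from a fixed line $L$ to a configuration-dependent line $L_C$ oriented by the unit vector $e_C=(o_2-o_1)/\|o_2-o_1\|$, together with a perpendicular unit vector $(e_C)^{\perp}$ supplied by a continuous non-vanishing tangent vector field on $S^{d-1}$, which exists precisely because $d$ is even. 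The crucial payoff is that, under this choice, $q_C(o_1)\neq q_C(o_2)$ always holds, so $|q_C(C)|\geq 2$ and the index $c$ ranges only over $\{2,3,\ldots,R+m\}$. This gives $R+m-1$ open sets over which a continuous section of $\Pi_{\br}$ exists, hence $\secat(\Pi_{\br})\leq R+m-2$.

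Combining the two bounds yields the claimed equality. The main obstacle in such a write-up is not the combination itself but verifying that the algorithm on the even-dimensional side is well-defined and continuous: one must check that the configuration-dependent line $L_C$ and vector field $(e_C)^{\perp}$ vary continuously in $C$, that each stratum $E^{\sigma}_{\mu,\nu}\cap W_c$ is open-and-closed in $W_c$ so that the local rules patch together, and that the deformation $H$ defined via $\delta_C$ continues to land inside a single $E^{\tau}_{\mu,R}$ for $s>0$. All of this is handled in the algorithm subsection; the role of this theorem's proof is simply to record that Proposition \ref{proposition lower bound even case} and the even-dimensional algorithm match, forcing equality.
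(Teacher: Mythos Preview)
Your proposal is correct and matches the paper's own argument essentially line for line: the paper states the theorem immediately after the even-dimensional algorithm, noting that the $R+m-1$ local rules give $\TC_{\br}[p\colon E\to B]\leq R+m-2$, and then invokes Proposition~\ref{proposition lower bound even case} for the matching lower bound. Your summary of why $|q_C(C)|\geq 2$ (and hence one rule is saved) is exactly the mechanism the paper uses.
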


\sect{Appendix}\label{sec:appendix}

\subsection{Appendix I-1}\label{Appendix for odd case}
In the proof of Proposition \ref{Prop_odd lower bound}, we skipped the laborious calculations to maintain the flow of the argument. Here, we provide a brief outline of these calculations. Using Proposition \ref{Prop_relations on cohomology classes}(a), we obtain the following simplification: 
\begin{align*}
x^0&=\prod^{m+n_1}_{j=m+1}(w^2_{1j} - w^1_{1j}) \prod^{r_{n_1}}_{s=2} ~\prod^{m+n}_{j=m+1} (w^s_{1j} - w^1_{1j}) \\
&= \prod^{m+n_1}_{j=m+1}(w^2_{1j} - w^1_{1j}) \prod^{m+n}_{j=m+1} (w^2_{1j} - w^1_{1j}) \prod^{r_{n_1}}_{s=3} ~\prod^{m+n}_{j=m+1} (w^s_{1j} - w^1_{1j})\\
&= \prod^{m+n_1}_{j=m+1}(w^2_{1j} - w^1_{1j})^2 \prod^{m+n}_{j=m+n_1+1}(w^2_{1j} - w^1_{1j}) \prod^{r_{n_1}}_{s=3} ~\prod^{m+n}_{j=m+1} (w^s_{1j} - w^1_{1j})\\
&= (-2)^{n_1}\prod^{m+n_1}_{j=m+1} w^1_{1j}w^2_{1j} \prod^{m+n}_{j=m+n_1+1}(w^2_{1j} - w^1_{1j}) ~\prod^{m+n}_{j=m+1} (w^3_{1j} - w^1_{1j})\cdots (w^{r_{n_1}}_{1j} - w^1_{1j})\\
&= (-2)^{n_1}\prod^{m+n_1}_{j=m+1} w^1_{1j}w^2_{1j} \prod^{m+n}_{j=m+n_1+1}(w^2_{1j} - w^1_{1j}) \\
& \hspace{4.5cm} \prod^{m+n}_{j=m+1} \Big(w^3_{1j} \cdots w^{r_{n_1}}_{1j} - w^1_{1j} \sum^{r_{n_1}}_{a=3}  w^3_{1j} \cdots \hat{w}^a_{1j}\cdots w^{r_{n_1}}_{1j} \Big)\\
&=(-2)^{n_1} \prod^{m+n_1}_{j=m+1} w^1_{1j}w^2_{1j}  \prod^{m+n_1}_{j=m+1} \Big(w^3_{1j} \cdots w^{r_{n_1}}_{1j} - w^1_{1j} \sum^{r_{n_1}}_{a=3}  w^3_{1j} \cdots \hat{w}^a_{1j}\cdots w^{r_{n_1}}_{1j} \Big)\\ 
&\hspace{3cm} \prod^{m+n}_{j=m+n_1+1} (w^2_{1j} - w^1_{1j}) \Big(w^3_{1j} \cdots w^{r_{n_1}}_{1j} - w^1_{1j} \sum^{r_{n_1}}_{a=3}  w^3_{1j} \cdots \hat{w}^a_{1j}\cdots w^{r_{n_1}}_{1j} \Big)  \\
&=(-2)^{n_1}\prod^{m+n_1}_{j=m+1} w^1_{1j} \cdots w^{r_{n_1}}_{1j}  \prod^{m+n}_{j=m+n_1+1} \Big(w^2_{1j} \cdots w^{r_{n_1}}_{1j} - w^1_{1j} \sum^{r_{n_1}}_{a=2}  w^2_{1j} \cdots \hat{w}^a_{1j}\cdots w^{r_{n_1}}_{1j} \Big)
\end{align*}
where $\hat{w}^a_{1j}$ denotes that the term ${w}^a_{1j}$ is absent in the product inside the summation. 
With similar simplification, we have
$$x^{\ell-1} = (-2)^{(n-n_{\ell-1})} \prod^{m+n}_{j=m+n_{\ell-1}+1} w^1_{1j} w^{r_{n_{\ell-1}}+1}_{1j} \cdots w^{r_n}_{1j}.$$
For $u = 1,\ldots, \ell-2$, we get
\begin{align*}
x^u =  (-2)^{(n_{u+1}- n_u)} & \prod^{m+n_{u+1}}_{j=m + n_u + 1} w^1_{1j} w^{r_{n_u}+1}_{1j} \cdots w^{r_{n_{u+1}}}_{1j}\\
 & \prod^{m+n}_{j=m+n_{u+1}+1} \Big(w^{r_{n_u}+1}_{1j} \cdots w^{r_{n_{u+1}}}_{1j} - w^1_{1j} \sum^{r_{n_{u+1}}}_{a=r_{n_u}+1}  w^{r_{n_u}+1}_{1j} \cdots \hat{w}^a_{1j}\cdots w^{r_{n_{u+1}}}_{1j}\Big).
\end{align*}
Now consider the product
\begin{multline*}
x^0 x^1= (-2)^{n_2} \prod^{m+n_1}_{j=m+1} w^1_{1j}\cdots w^{r_{n_1}}_{1j} \prod^{m+n_2}_{j=m+n_1 +1} w^1_{1j}\cdots w^{r_{n_2}}_{1j} 
\\
\prod^{m+n}_{j=m+n_2+1}\Big({w}^2_{1j}\cdots w^{r_{n_2}}_{1j} - w^1_{1j} \sum^{r_{n_2}}_{a=2} w^2_{1j}\cdots \hat{w}^a_{1j}\cdots w^{r_{n_2}}_{1j} \Big).
\end{multline*}
Similarly,
\begin{align*}
x^0 x^1 x^2 = & ~(-2)^{n_3} \prod^{m+n_1}_{j=m+1} w^1_{1j}\cdots w^{r_{n_1}}_{1j} \prod^{m+n_2}_{j=m+n_1 +1} w^1_{1j}\cdots w^{r_{n_2}}_{1j} \prod^{m+n_3}_{j=m+n_2+1} w^1_{1j}\cdots w^{r_{n_3}}_{1j}\\
& \hspace{2cm} \prod^{m+n}_{j=m+n_3+1} \Big(w^2_{1j}\cdots w^{r_{n_3}}_{1j} - w^1_{1j} \sum^{r_{n_3}}_{a=2} w^2_{1j}\cdots \hat{w}^a_{1j}\cdots w^{r_{n_3}}_{1j}\Big).
\end{align*}

\pagebreak

\noindent Continuing this multiplication it follows that 
\begin{align*}
x^0 x^1 \cdots x^{\ell-2} = & (-2)^{n_{\ell-1}} \prod^{m+n_1}_{j=m+1} w^1_{1j}\cdots w^{r_{n_1}}_{1j} \prod^{m+n_2}_{j=m+n_1 +1} w^1_{1j}\cdots w^{r_{n_2}}_{1j}\cdots \prod^{m+n_{\ell-1}}_{j=m+n_{\ell-2}+1} w^1_{1j}\cdots w^{r_{n_{\ell-1}}}_{1j}\\
& \hspace{2cm} \prod^{m+n}_{j=m+n_{\ell-1}+1} \Big(w^2_{1j}\cdots w^{r_{n_{\ell-1}}}_{1j} - w^1_{1j} \sum^{r_{n_{\ell-1}}}_{a=2} w^2_{1j}\cdots \hat{w}^a_{1j}\cdots w^{r_{n_{\ell-1}}}_{1j}\Big).
\end{align*}
Therefore, 
\begin{align*}
x^0 x^1 \cdots x^{\ell-2} x^{\ell-1} = & (-2)^n \prod^{m+n_1}_{j=m+1} w^1_{1j}\cdots w^{r_{n_1}}_{1j} \cdots \prod^{m+n_{\ell-1}}_{j=m+n_{\ell-2}+1} w^1_{1j}\cdots w^{r_{n_{\ell-1}}}_{1j} \prod^{m+n}_{j=m+n_{\ell-1}+1} w^1_{1j}\cdots w^{r_n}_{1j}\\
= &(-2)^n \prod^{m+n}_{j=m+1} w^1_{1j}\cdots w^{r_{n_1}}_{1j} \prod^{m+n}_{j=m+n_1+1} w^{r_{n_1}+1}_{1j} \cdots w^{r_{n_2}}_{1j} \cdots \prod^{m+n}_{j=m+n_{\ell-1}+1} w^{r_{\ell-1}+1}_{1j}\cdots w^{r_n}_{1j}.
\end{align*}

\subsection{Appendix I-2}\label{Appendix odd only once}
It is clear that $I'=\emptyset$ whenever $I=\{2,\ldots,m\}$. Using Proposition \ref{Prop_relations on cohomology classes}(b), we have
\begin{align*}
w^1_{IJ}w^2_{I'J'}w^1_{1(m+1)}w^2_{1(m+1)} = & ~w^1_{IJ}w^1_{1(m+1)}w^2_{1(m+1)}~~(\text{ since } w^2_{I'J'}=1)\\
= & ~w^1_{2(m+1)}w^1_{3(m+1)}\cdots w^1_{m(m+1)}w^1_{1(m+1)}w^2_{1(m+1)} \\
= & ~w^1_{1(m+1)}w^1_{2(m+1)}w^1_{3(m+1)}\cdots w^1_{m(m+1)}w^2_{1(m+1)} \\
= & ~w_{12}(w^1_{2(m+1)}-w^1_{1(m+1)})w^1_{3(m+1)}\cdots w^1_{m(m+1)}w^2_{1(m+1)} \\
= & ~w_{12}w^1_{2(m+1)}w^1_{3(m+1)}\cdots w^1_{m(m+1)}w^2_{1(m+1)}-  \\
& ~~ w_{12}w^1_{1(m+1)}w^1_{3(m+1)}\cdots w^1_{m(m+1)}w^2_{1(m+1)} \\
= & ~w_{12}w_{23}(w^1_{3(m+1)} - w^1_{2(m+1)})\cdots w^1_{m(m+1)}w^2_{1(m+1)} - \\
 & ~~ w_{12}w_{13}(w^1_{3(m+1)}-w^1_{1(m+1)})\cdots w^1_{m(m+1)}w^2_{1(m+1)} 
\end{align*}
and so on. Following this process, we finally obtain that the term $$w_{12}w_{23}\cdots w_{2m}w^1_{2(m+1)}w^2_{1(m+1)}$$ appears in $w^1_{IJ}w^2_{I'J'}w^1_{1(m+1)}w^2_{1(m+1)}$ exactly once. 
Similarly, $I=\emptyset$ whenever $I' = \{2,\ldots,m\}$, and we can get that the term $w_{12}w_{23}\cdots w_{2m}w^2_{2(m+1)}w^1_{1(m+1)}$ appears in $w^1_{IJ}w^2_{I'J'}w^1_{1(m+1)}w^2_{1(m+1)}$ that is close to our previous term $w_{12}w_{23}\cdots w_{2m}w^1_{2(m+1)}w^2_{1(m+1)}$ but not equal.

\subsection{Appendix II} \label{app_proposition lower bound even case}
Here, we present the calculation that was skipped in Proposition \ref{proposition lower bound even case} for the sake of maintaining the fluency of the argument. Consider
\begin{equation}\label{Eq_zj}
z_j = \Big(w^2_{1(j-1)} w^2_{(j-1)j} - w^2_{1(j-1)} w^2_{1j} - w^1_{1j} w^2_{(j-1)j} +w^1_{(j-1)j} w^2_{1j} + w^1_{1(j-1)} w^1_{(j-1)j} - w^1_{1(j-1)} w^1_{1j}\Big),
\end{equation}
and hence $\tilde{y}' = \displaystyle\prod^{m+n}_{j=m+2} z_j$. Thus, we have 

\pagebreak

\begin{align*}
\tilde{y}' \tilde{y}^0 y^1 \cdots y^{\ell-1} 
= & \pm \big(w^3_{1(m+1)} \cdots w^{r_{n_1}}_{1(m+1)} \pm w^1_{1(m+1)} \sum^{r_{n_1}}_{a=3} w^3_{1(m+1)} \cdots \hat{w}^a_{1(m+1)} \cdots w^{r_{n_1}}_{1(m+1)}\big)\\
& \hspace{1cm} \prod^{m+n_1}_{j=m+2} z_j \Big(w^3_{1j} \cdots w^{r_{n_1}}_{1j} \pm w^1_{1j} \sum^{r_{n_1}}_{a=3} w^3_{1j} \cdots \hat{w}^a_{1j} \cdots w^{r_{n_1}}_{1j}\Big)\\
& \hspace{1.5cm} \prod^{m+n_2}_{j=m+n_1 +1} z_j \Big(w^3_{1j} \cdots w^{r_{n_2}}_{1j} \pm w^1_{1j} \sum^{r_{n_2}}_{a=3} w^3_{1j} \cdots \hat{w}^a_{1j} \cdots w^{r_{n_2}}_{1j}\Big) \cdots \\
& \hspace{2cm} \prod^{m+n}_{j=m+n_{\ell-1} +1} z_j \Big(w^3_{1j} \cdots w^{r_{n}}_{1j} \pm w^1_{1j} \sum^{r_{n}}_{a=3} w^3_{1j} \cdots \hat{w}^a_{1j} \cdots w^{r_{n}}_{1j}\Big).
\end{align*}
Now we want to understand the product. One may think of the six summands of $z_j$ in \eqref{Eq_zj} as $x_j^i$ for $i=1, \dots, 6$; i.e. $x_j^1=w^2_{1(j-1)}w^2_{(j-1)j}$, $x_j^2=w^2_{1(j-1)}w^2_{1j}$ and so on. For $u=1,\ldots,\ell$, we assume $\big(w^3_{1j} \cdots w^{r_{n_u}}_{1j} \pm w^1_{1j} \displaystyle\sum^{r_{n_u}}_{a=3} w^3_{1j} \cdots \hat{w}^a_{1j} \cdots w^{r_{n_u}}_{1j}\big)$ by $W^u_j=W^{1u}_j \pm W^{2u}_j$. 
Notice that the product $x^3_j W^{1u}_j$ produces $w^1_{1j}w^2_{(j-1)j}w^3_{1j}\cdots w^{r_{n_u}}_{1j}$ that neither of the other products produces nor cancels. Similarly, $x^4_j W^{1u}_j$ yields $w^1_{(j-1)j}w^2_{1j}w^3_{1j} \cdots w^{r_{n_u}}_{1j}$ that neither of the other products produces nor cancels.
Thus, for $u=1,\ldots, \ell$, each factor of the form $z_j \Big(w^3_{1j} \cdots w^{r_{n_u}}_{1j} \pm w^1_{1j} \displaystyle\sum^{r_{n_i}}_{a=3} w^3_{1j} \cdots \hat{w}^a_{1j} \cdots w^{r_{n_u}}_{1j}\Big)$ contains the following two terms with a single occurrence each:
$$w^1_{(j-1)j}w^2_{1j}w^3_{1j} \cdots w^{r_{n_u}}_{1j} \quad \text{ and } \quad w^1_{1j}w^2_{(j-1)j}w^3_{1j}\cdots w^{r_{n_u}}_{1j}.$$ 
It follows that the product $ \displaystyle\prod^{m+n_1}_{j=m+2} z_j \Big(w^3_{1j} \cdots w^{r_{n_1}}_{1j} \pm w^1_{1j} \sum^{r_{n_1}}_{a=3} w^3_{1j} \cdots \hat{w}^a_{1j} \cdots w^{r_{n_1}}_{1j}\Big)$ contains the term $\displaystyle\prod^{m+n_1}_{j=m+2}w^1_{1j}w^2_{(j-1)j}w^3_{1j} \cdots w^{r_{n_1}}_{1j}$ only once. For $u= 2,\ldots, \ell$, we use similar argument to conclude that the product $ \displaystyle\prod^{m+n_u}_{j=m+n_{u-1}+1} z_j \Big(w^3_{1j} \cdots w^{r_{n_u}}_{1j} \pm w^1_{1j} \sum^{r_{n_u}}_{a=3} w^3_{1j} \cdots \hat{w}^a_{1j} \cdots w^{r_{n_u}}_{1j}\Big)$ contains the term $\displaystyle\prod^{m+n_u}_{j=m+n_{u-1}+1}w^1_{1j}w^2_{(j-1)j}w^3_{1j} \cdots w^{r_{n_u}}_{1j}$ precisely once. Hence, their product occur only once in $\tilde{y}' \tilde{y}^0 y^1 \cdots y^{\ell-1}$.

\vspace{1cm}

\noindent {\bf Acknowledgment.}
The first and second authors thank NBHM for their postdoctoral fellowships. The third author thanks IIT Kanpur for its postdoctoral fellowship.

\bibliographystyle{abbrv}
\bibliography{TC-reference}

\end{document}